\newtheorem{thm}{Theorem}[section]
\newtheorem{lem}[thm]{Lemma}
\newtheorem{claim}[thm]{Claim}
\newtheorem{prop}[thm]{Proposition}
\theoremstyle{definition}
\newtheorem{question}[thm]{Question}
\newtheorem{defn}[thm]{Definition}
\newtheorem{rmk}[thm]{Remark}
\newtheorem*{ack}{Acknowledgements}
\numberwithin{equation}{section}
\newcommand{\var}{\overline}
\theoremstyle{plain}
\keywords{Abelian varieties; toric varieties; numerically flat; Frobenius splitting}
\subjclass[2020]{Primary 14G17, ; Secondary 14M25, 14J40, 14K99}
\def\R{{\mathbb R}}
\def\Z{{\mathbb Z}}
\def\var{\overline}
\def\sO{\mathcal{O}}
\DeclareMathOperator{\Pic}{Pic}
\DeclareMathOperator{\Spec}{Spec}
\def\X{{\mathcal X}}
  \def\MR#1{}
\newenvironment{claimproof}[0]
  {%
   \paragraph{\it Proof.}%
  }
  {%
    \hfill$\blacksquare$%
  }
\def\@tocline#1#2#3#4#5#6#7{\relax
  \ifnum #1>\c@tocdepth 
  \else
    \par \addpenalty\@secpenalty\addvspace{#2}%
    \begingroup \hyphenpenalty\@M
    \@ifempty{#4}{%
      \@tempdima\csname r@tocindent\number#1\endcsname\relax
    }{%
      \@tempdima#4\relax
    }%
    \parindent\z@ \leftskip#3\relax \advance\leftskip\@tempdima\relax
    \rightskip\@pnumwidth plus4em \parfillskip-\@pnumwidth
    #5\leavevmode\hskip-\@tempdima
      \ifcase #1
       \or\or \hskip 1em \or \hskip 2em \else \hskip 3em \fi%
      #6\nobreak\relax
    \hfill\hbox to\@pnumwidth{\@tocpagenum{#7}}\par
    \nobreak
    \endgroup
  \fi}
\title[Varieties  with numerically flat log cotangent bundle]
{Varieties in positive characteristic with numerically flat log cotangent bundle}
\author{Sho Ejiri, Shou Yoshikawa}
\address{Department of Mathematics, Graduate School of Science, Osaka Metropolitan University, Osaka City, Osaka 558-8585, Japan}
\email{shoejiri.math@gmail.com}
\address{Tokyo Institute of Technology, 2-12-1 Ookayama, Meguro-ku, Tokyo 152-8550 Japan}
\email{yoshikawa.s.al@m.titech.ac.jp}
\begin{document}

\begin{abstract}
In this paper, we prove that a smooth projective globally $F$-split variety with numerically flat tangent bundle is an \'etale quotient of an ordinary abelian variety.
We also show its logarithmic analog, which contains a characterization of toric varieties.
We further prove that, without assumption of global $F$-splitting, a smooth projective separably rationally connected 
variety of arbitrary characteristic with numerically flat log cotangent bundle is a toric variety. 
\end{abstract}

\maketitle

\setcounter{tocdepth}{2}


\section{Introduction}

\subsection{Positivity of tangent bundles}
The positivity condition imposed on the tangent bundle of a smooth projective variety is known to restrict the geometric structure of the variety. 
Hartshorne~\cite{Har70} conjectured that if the tangent bundle is ample, where ampleness is a strong positivity condition defined for vector bundles, then the variety is isomorphic to a projective space.  
This conjecture was verified by Mori~\cite{Mor79}.
Furthermore, Demailly--Peternell--Schneider~\cite{DPS94} established a decomposition theorem for a smooth projective variety over $\mathbb C$ with nef tangent bundle, where  nefness is roughly defined as the ``limit'' of ampleness.
The theorem states that, up to an \'etale cover, such a variety has a smooth fibration
over an abelian variety whose fibers are Fano varieties, so one can say that such a variety decomposes into the ``positive'' part and the ``flat'' part.

A positive characteristic analog of the above decomposition theorem was proved by Kanemitsu and Watanabe~\cite{KW20}.
The ``flat'' part of their theorem is a smooth projective variety with numerically flat tangent bundle, and they left the following question:
\begin{question}\label{ques:abelian}
Is a smooth projective variety with numerically flat tangent bundle an \'etale quotient of an abelian variety?
\end{question}
\noindent
Here, a vector bundle $E$ is said to be numerically flat if both $E$ and $E^\vee$ are nef. 
Note that, in characteristic zero, Question~\ref{ques:abelian} is solved affirmatively by using the Beauville--Bogomolov decomposition.
When the tangent bundle is free and the variety is globally $F$-split, 
a theorem of Mehta and Srinivas~\cite{metha-srinivas} answers Question~\ref{ques:abelian} affirmatively.
We say that a variety $X$ is \textit{globally $F$-split} if the Frobenius morphism $F^\sharp:\mathcal O_X \to F_*\mathcal O_X$ splits as an $\mathcal O_X$-homomorphism.
In the case when the variety is \textbf{not} globally $F$-split, Question~\ref{ques:abelian} is open even if the tangent bundle is free.

In this paper, we answer  Question~\ref{ques:abelian} affirmatively when the variety is globally $F$-split.
\begin{thm} \label{intro:thm:characterization of abelian varieties} \samepage
Let $X$ be a smooth projective  variety over an algebraically closed field of positive characteristic.
Then the following are equivalent:
\begin{enumerate}
\item $X$ is an \'etale quotient of an ordinary abelian variety;
\item $X$ is globally $F$-split and the tangent bundle $T_X$ is numerically flat.
\end{enumerate}
\end{thm}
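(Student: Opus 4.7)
The implication $(1) \Rightarrow (2)$ is routine. If $\pi : A \to X$ is a finite étale Galois cover with $A$ an ordinary abelian variety, then $\pi^{*} T_X \cong T_A$ is trivial, so $T_X$ is numerically flat (numerical flatness descends along finite surjective morphisms). The canonical Frobenius splitting on an ordinary abelian variety is translation-invariant and hence Galois-equivariant, so it descends to a global $F$-splitting on $X$.

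The content lies in $(2) \Rightarrow (1)$. The plan is to produce a finite étale cover $\tilde X \to X$ over which $T_{\tilde X}$ becomes trivial, and then to invoke the theorem of Mehta--Srinivas (quoted above for the free-tangent-bundle case). Since global $F$-splitting ascends along finite étale covers, this reduction is legitimate. Concretely, I would proceed in the following steps. First, numerical flatness of $T_X$ gives $K_X \equiv 0$; combined with global $F$-splitting, a standard argument shows $K_X$ is a torsion line bundle, and we pass to its index-one cover so that $K_X \sim 0$. Second, take a Jordan--Hölder filtration of $T_X$ with respect to some polarization; the graded pieces are numerically flat stable vector bundles. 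Third, by combining the $F$-splitting with numerical flatness, show Frobenius-periodicity $(F^e)^{*} E_i \cong E_i$ for each stable factor $E_i$; then by a Lange--Stuhler type theorem in positive characteristic, each $E_i$ trivializes on a finite étale cover. Fourth, show that the extensions splicing the graded pieces split after a further étale cover, again using $F$-splitting to control the relevant $\mathrm{Ext}^{1}$ classes. After all this, $T_{\tilde X}$ is trivial on some étale cover $\tilde X$ of $X$, to which Mehta--Srinivas applies.

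The main obstacle, I expect, is splitting the extensions in the Jordan--Hölder filtration---equivalently, showing that after a suitable finite étale cover $T_X$ decomposes as a direct sum of its semisimplification rather than as a nontrivial iterated extension. In characteristic zero this is handled by Hodge theory and the Beauville--Bogomolov decomposition; here we must rely entirely on the $F$-splitting hypothesis to force vanishing or triviality of the $H^{1}(\tilde X, \mathcal O_{\tilde X})$-classes that obstruct splitting. Executing this Frobenius-theoretic analysis, without losing control of the Chern classes or of the $F$-splitting when passing to covers, is the technical heart of the argument.
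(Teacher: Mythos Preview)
Your Step~3 is where the plan breaks. Lange--Stuhler and its descendants are finite-field statements: a stable bundle over $\mathbb F_q$ is Frobenius-periodic because the relevant moduli set is finite, and Frobenius-periodicity is what makes \'etale trivialization possible. Over an arbitrary algebraically closed field $k$ of characteristic $p$ neither step is available. Already for line bundles, a non-torsion $L \in \Pic^0$ is numerically flat but satisfies $(F^e)^* L \cong L^{p^e} \not\cong L$ for every $e$, and no finite \'etale cover kills it. Nothing in ``$F$-split plus numerically flat'' forces the Jordan--H\"older factors of $T_X$ to avoid this behaviour, so you cannot expect to trivialize $T_X$ on a finite \'etale cover of $X$ in general. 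The same objection hits Step~4: the $\mathrm{Ext}^1$ classes you want to kill live in $H^1$ of numerically flat bundles, and over a general $k$ there is no mechanism to annihilate them on a finite cover.

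The paper's proof avoids this trap by changing both the target and the field. It does not try to free up $T_X$; instead it shows that the Cartier sequence
\[
0 \longrightarrow B^1_X \longrightarrow Z^1_X \longrightarrow \Omega_X \longrightarrow 0
\]
splits, which is precisely the hypothesis of Mehta--Srinivas's Theorem~2 (a different statement from the free-tangent-bundle one you invoke). To prove the splitting, the paper spreads $X$ out over a base of finite type over $\mathbb F_p$ and works at closed points, where the residue fields are finite. There one knows that any numerically flat bundle becomes free after an \'etale cover and a Frobenius pullback, so the obstruction class in $H^1(X, B^1_X \otimes T_X)$ dies on an \'etale cover, the $F$-splitting being used to undo the Frobenius. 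A base-change/constructibility argument then propagates the splitting back to the original $k$. If you graft a spreading-out step onto your outline you can probably make it run, but at that point you are essentially reproducing the paper's argument with all of $T_X$ in place of a single extension class, which is strictly more work for the same conclusion.
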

\noindent
We say that an abelian variety $A$ of positive characteristic is \textit{ordinary} if the $p$-rank is equal to the dimension, and it is known that the ordinarity of $A$ is equivalent to the global $F$-splitting of $A$.
The numerical flatness of $T_X$ is known to be equivalent to the condition that there exists an ample divisor $H$ on $X$ such that $T_X$ is $H$-semistable and
\[
\mathrm{ch}_1(X)\cdot H^{n-1}=\mathrm{ch}_2(X) \cdot H^{n-2}=0
\]
(see Proposition \ref{prop:numerical flat}), where $n$ is the dimension of $X$.
Theorem~\ref{intro:thm:characterization of abelian varieties} is a corollary of the main theorem of this paper (Theorem~\ref{intro:thm:toric fibration}) mentioned in the next subsection.

Combining Theorem~\ref{intro:thm:characterization of abelian varieties} with Kanemitsu and Watanabe's theorem, we obtain the following decomposition theorem:
\begin{thm} \label{intro:thm:decomposition} \samepage
Let $X$ be a smooth projective variety over an algebraically closed field of positive characteristic. 
Suppose that the tangent bundle $T_X$ is nef and $X$ is globally $F$-split.
Then there exists a finite \'etale cover $f:Y\to X$ and a smooth algebraic fiber space $\varphi:Y\to A$ such that 
\begin{enumerate}
\item $\varphi$ is the MRCC fibration of $Y$, 
\item every fiber of $\varphi$ is a globally $F$-split and separably rationally connected Fano variety with nef tangent bundle, and
\item $A$ is an ordinary abelian variety. 
\end{enumerate}
\end{thm}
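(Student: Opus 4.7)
The plan is to combine Kanemitsu--Watanabe's decomposition theorem with Theorem~\ref{intro:thm:characterization of abelian varieties}.

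First, I would invoke Kanemitsu--Watanabe. Since $T_X$ is nef, they produce a finite étale cover $f_0: Y_0 \to X$ and a smooth algebraic fiber space $\varphi_0: Y_0 \to Z_0$ which is the MRCC fibration of $Y_0$, whose fibers are separably rationally connected Fano varieties with nef tangent bundle, and whose base $Z_0$ is a smooth projective variety with numerically flat tangent bundle.

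Second, I would propagate global $F$-splitting down this tower. Finite étale covers preserve global $F$-splitting, so $Y_0$ is globally $F$-split. Since $\varphi_0$ has rationally chain connected fibers, $(\varphi_0)_*\mathcal{O}_{Y_0} = \mathcal{O}_{Z_0}$; pushing a splitting of the Frobenius of $Y_0$ forward along $\varphi_0$ therefore yields a splitting of the Frobenius of $Z_0$. Hence $Z_0$ is globally $F$-split.

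Third, I would apply Theorem~\ref{intro:thm:characterization of abelian varieties} to $Z_0$: as $Z_0$ is globally $F$-split with numerically flat tangent bundle, there is a finite étale cover $A \to Z_0$ from an ordinary abelian variety $A$. Form the fibered product $Y := Y_0 \times_{Z_0} A$. Then $Y \to Y_0$ is finite étale, the composite $f: Y \to X$ is finite étale, and the second projection $\varphi: Y \to A$ is a smooth algebraic fiber space. Because $\varphi$ is an étale base change of the MRCC fibration $\varphi_0$, it remains the MRCC fibration of $Y$, and its fibers are isomorphic as schemes to those of $\varphi_0$, hence are separably rationally connected Fano varieties with nef tangent bundle.

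The remaining and most delicate point is to verify that \emph{every} fiber of $\varphi$ is globally $F$-split. Here I would use that the total space $Y$ is globally $F$-split (being finite étale over $X$) together with the ordinarity of $A$. The idea is to choose a Frobenius splitting of $Y$ compatible with a splitting of $A$ arising from ordinarity; exploiting the triviality of $\omega_A$ and adjunction, such a splitting restricts to a Frobenius splitting on a general fiber. Since the MRCC fibration over an abelian variety is (at least étale-locally) isotrivial, the splitting of one fiber forces every fiber to be globally $F$-split. This last step — transferring global $F$-splitting from the total space to each individual fiber of $\varphi$ — is the main technical obstacle; the preceding étale covers and base changes are essentially formal once Theorem~\ref{intro:thm:characterization of abelian varieties} is in hand.
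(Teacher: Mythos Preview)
Your overall architecture matches the paper's proof exactly: invoke Kanemitsu--Watanabe to obtain the MRCC fibration onto a base with numerically flat tangent bundle, push global $F$-splitting down to that base via $\varphi_*\mathcal O=\mathcal O$, apply Theorem~\ref{intro:thm:characterization of abelian varieties} to replace the base by an ordinary abelian variety after a further \'etale cover, and then base-change. (A minor point: the version of \cite{KW20}*{Theorem~1.7} used in the paper already produces the fibration on $X$ itself, so your initial cover $f_0$ is unnecessary, though harmless.)

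The divergence, and the genuine gap, is in your final step. The paper does \emph{not} argue via isotriviality. Instead it observes that $Y$ is globally $F$-split, then cites \cite{Eji19w}*{Proposition~5.11} to conclude that the morphism $\varphi:Y\to A$ is (locally) $F$-split, and \cite{Eji19w}*{Proposition~5.7} to deduce that every fiber is globally $F$-split. Your proposed route --- restrict a compatible splitting to a general fiber and then propagate via \'etale-local isotriviality of the MRCC fibration over $A$ --- is problematic on the second point: there is no reason to expect a smooth family of separably rationally connected Fano varieties over an abelian variety to be isotrivial in positive characteristic (Fano varieties have genuine moduli, and an abelian variety can map nontrivially to a moduli space), and nothing in Kanemitsu--Watanabe supplies this. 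Even the first point (restriction of a splitting to a general fiber) requires care, since a global splitting of $Y$ corresponds to a section of $\omega_Y^{1-p}$ whose restriction to $Y_a$ need not define a splitting of $Y_a$ without further argument. The machinery of $F$-split morphisms in \cite{Eji19w} is precisely what handles both issues cleanly and uniformly for \emph{all} fibers.
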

\subsection{Numerically flat logarithmic cotangent bundles}
Let $X$ be a smooth projective variety over an algebraically closed field
and let $D$ be a normal crossing divisor on $X$.
Similarly to the case of $D=0$, some geometry of the pair $(X,D)$ are derived from the property of the sheaf $\Omega_X(\mathrm{log}\,D)$ of differentials with log poles along $D$.
In characteristic zero, Winkelmann~\cite{Win04} proved that if  $X$ is rationally connected and $\Omega_X(\mathrm{log}\,D)$ is free, then $(X,D)$ is a toric pair.
%
In positive characteristic, Achinger--Witaszek--Zdanowicz~\cite{AWZ21} showed that the following are equivalent:
\begin{enumerate}
\item[(a)] there exists a finite \'etale cover $f:Y\to X$ such that $Y$ has a toric fibration (see Definition~\ref{defn:toric fibration}) over an ordinary abelian variety with toric boundary $f^*D$;
\item[(b)] $X$ is globally $F$-split and $\Omega_X(\mathrm{log}\,D)$ becomes free on a finite \'etale cover of $X$.
\end{enumerate}

Condition~(b) implies that $\Omega_X(\mathrm{log}\,D)$ is numerically flat,
so it is natural to ask ``can condition~(b) be weakened to the condition that $\Omega_X(\mathrm{log}\,D)$ is numerically flat?''
This question is solved affirmatively by the following theorem, which is the main theorem of this paper.
\begin{thm} \label{intro:thm:toric fibration} \samepage
Let $X$ be a smooth projective variety over an algebraically closed field of positive characteristic.
Let $D$ be a normal crossing divisor on $X$.
Then the following are equivalent:
\begin{enumerate}
\item there exists a finite \'etale cover $f:Y\to X$ such that $Y$ has a toric fibration over an ordinary abelian variety with toric boundary $f^*D$; 
\item $X$ is globally $F$-split and $\Omega_X(\mathrm{log}\,D)$ is numerically flat. 
\end{enumerate}
\end{thm}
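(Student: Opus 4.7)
The plan is to prove $(1)\Rightarrow(2)$ by descent along $f$, and $(2)\Rightarrow(1)$ by constructing a toric fibration on a suitable \'etale cover of $X$ via the Albanese morphism, using the paper's separate classification (announced in the abstract) of separably rationally connected (SRC) varieties with numerically flat log cotangent bundle as toric varieties.

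For the easy direction $(1)\Rightarrow(2)$: if $\pi\colon Y\to A$ is a toric fibration onto an ordinary abelian variety with toric boundary $f^*D$, then $\Omega_Y(\log f^*D)$ fits in a short exact sequence whose outer terms, the pullback $\pi^*\Omega_A$ and the relative log cotangent $\Omega_{Y/A}(\log f^*D)$, are both free; hence $\Omega_Y(\log f^*D)$ is free, and a fortiori numerically flat. Moreover $Y$ is globally $F$-split, being a toric fibration over an ordinary (hence globally $F$-split) abelian variety. Both numerical flatness of $\Omega_X(\log D)$ and global $F$-splitting of $X$ then descend under the finite \'etale Galois cover $f$.

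For $(2)\Rightarrow(1)$, assuming $X$ is globally $F$-split and $\Omega_X(\log D)$ is numerically flat, I would use the Albanese morphism $a\colon X\to A$, where $A$ denotes the Albanese variety of $X$, and proceed in four steps. First, the global $F$-splitting of $X$ descends to $A$, and for abelian varieties global $F$-splitting is equivalent to ordinarity, so $A$ is an ordinary abelian variety. Second, the numerical flatness of $\Omega_X(\log D)$ forces the natural map $a^*\Omega_A\to \Omega_X\hookrightarrow \Omega_X(\log D)$ to be a subbundle inclusion, and combined with a generic-smoothness argument and the $F$-splitting this implies that $a$ is smooth and surjective. Third, the geometric fibers $X_t$ of $a$ are smooth projective, have numerically flat log cotangent bundle (as restrictions of $\Omega_X(\log D)$), and are separably rationally connected: they admit no nontrivial morphism to an abelian variety, and this is upgraded to SRC using global $F$-splitting. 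Hence by the paper's SRC/toric classification, each $(X_t, D|_{X_t})$ is a toric variety with toric boundary $D|_{X_t}$. Fourth, after a finite \'etale cover $A'\to A$ that trivializes the monodromy of $\pi_1(A)$ acting on the cocharacter lattice of the toric fiber, the base change $Y := X\times_A A'\to A'$ assembles into a toric fibration over the ordinary abelian variety $A'$, with toric boundary the pullback of $D$, as required.

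The main obstacle, I expect, lies in the fourth step: upgrading a fiberwise-toric structure to a \emph{global} toric fibration after \'etale base change requires showing that the monodromy representation of $\pi_1(A)$ on the cocharacter lattice of the generic fiber has finite image, and the global $F$-splitting of $X$ is what must ultimately force this finiteness, via a Frobenius-compatibility constraint on the representation. The second step, establishing smoothness of the Albanese map in positive characteristic, will likewise demand careful use of the $F$-splitting to control the cotangent complex of $a$ and to avoid pathologies specific to characteristic $p$. Once these two steps are in hand, the remaining assembly into a toric fibration and the invocation of the SRC/toric classification should be essentially formal.
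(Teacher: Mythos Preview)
Your $(1)\Rightarrow(2)$ is fine and matches the paper. For $(2)\Rightarrow(1)$ your strategy diverges entirely from the paper's and has a genuine gap in Step~3: the argument that the Albanese fibers $X_t$ are separably rationally connected is not valid. The implication ``trivial Albanese and globally $F$-split $\Rightarrow$ SRC'' is false in general (ordinary K3 surfaces are counterexamples), so what you actually need is ``trivial Albanese, globally $F$-split, and numerically flat log cotangent $\Rightarrow$ SRC''---but the only apparent route to that is to apply the very theorem you are proving to $X_t$ and deduce it is toric up to an \'etale cover, which is circular. Steps~2 and~4, which you already flag, are also serious: smoothness of the Albanese map in characteristic $p$ is not available off the shelf under these hypotheses, and passing from ``each fiber is abstractly a toric variety'' to ``the family is a toric fibration'' requires producing a \emph{relative} torus action, not merely controlling a lattice monodromy.

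The paper's proof avoids the Albanese map and the SRC theorem completely. It uses the Achinger--Witaszek--Zdanowicz criterion (Lemma~\ref{lem:pre chara}) that condition~(1) is equivalent to the splitting of the Cartier sequence
\[
0 \longrightarrow B^1_X \longrightarrow Z^1_X(\log D) \longrightarrow \Omega_X(\log D) \longrightarrow 0,
\]
and shows this extension class vanishes by spreading $(X,D)$ out over a base of finite type over $\F_p$ and checking vanishing at closed (finite-field) fibers. Over a finite field, $F$-splitness lets one lift the class to $H^1$ of the numerically flat bundle $F^*\Omega_X(\log D)^\vee$, and the structure theory of numerically flat bundles over finite fields (Lemma~\ref{lem:kill}) kills it on a finite \'etale cover; $F$-splitness then descends the vanishing back through Frobenius. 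No fiberwise geometric analysis is needed.
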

Although Theorems~\ref{intro:thm:characterization of abelian varieties}, \ref{intro:thm:decomposition} and~\ref{intro:thm:toric fibration} need the assumption that $X$ is globally $F$-split, we can get rid of the assumption if $X$ is separably rationally connected.
This is a part of the following theorem that is a characterization of toric varieties in positive characteristic.
\begin{thm} \label{intro:thm:toric_p} \samepage
Let $X$ be a smooth projective variety over an algebraically closed field of positive characteristic.
Let $D$ be a normal crossing divisor on $X$.
Then the following are equivalent:
\begin{enumerate}
\item $X$ is a toric variety with toric boundary $D$;
\item $\Omega_X(\mathrm{log}\,D)$ is numerically flat and $X$ is separably rationally connected;
\item $\Omega_X(\mathrm{log}\,D)$ is numerically flat and $X$ is rationally connected and globally $F$-split.
\end{enumerate}
\end{thm}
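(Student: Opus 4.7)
My plan is to establish the equivalence by closing the cycle $(1)\Rightarrow(3)\Rightarrow(1)$ using Theorem~\ref{intro:thm:toric fibration}, together with the more delicate implication $(2)\Rightarrow(3)$. The direction $(1)\Rightarrow(2)$ is immediate, and $(3)\Rightarrow(2)$ follows by composition once $(3)\Rightarrow(1)$ and $(1)\Rightarrow(2)$ are in hand.

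For $(1)\Rightarrow(3)$, I collect standard facts about a smooth projective toric pair $(X,D)$: the sheaf $\Omega_X(\log D)$ is trivial via the $\mathrm{d}\log$ frame of torus coordinates (hence numerically flat), $X$ is globally $F$-split via the character-lattice decomposition of $F_*\mathcal O_X$, and the dense torus orbit is swept out by free rational curves coming from one-parameter subgroups, so $X$ is separably rationally connected.

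For $(3)\Rightarrow(1)$, I apply Theorem~\ref{intro:thm:toric fibration} to produce a finite \'etale cover $f:Y\to X$ and a toric fibration $g:Y\to A$ over an ordinary abelian variety $A$, with toric boundary $f^*D$. Since $\P^1$ has trivial \'etale fundamental group, rational curves in $X$ lift (as disjoint unions of $\P^1$'s) to $Y$, and chaining such lifts together with fiber components shows that $Y$ is rationally connected. Any morphism from a rationally connected variety to an abelian variety is constant, forcing $A$ to be a point and $Y$ itself to be toric. A smooth projective toric variety is simply connected via its cell decomposition into affine spaces, so $f$ is an isomorphism and $X=Y$ is toric with boundary $D$.

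The crux is $(2)\Rightarrow(3)$: from SRC and numerical flatness of $\Omega_X(\log D)$ one must extract global $F$-splitting, since rational connectedness is automatic from SRC. My strategy is to combine the relation $K_X+D\equiv 0$ (from numerical flatness) with Koll\'ar's theorem that smooth projective SRC varieties are simply connected, to argue that $K_X+D$ is torsion of some order $m$ in $\mathrm{Pic}(X)$; provided $p\nmid m$, the divisibility $m\mid p^e-1$ for suitable $e$ produces a section of $\omega_X^{1-p^e}$ which, via a Fedder-type criterion, yields an $F^e$-splitting and hence (since $F^e$-split implies $F$-split) an $F$-splitting. The main obstacle lies precisely here: ruling out $p$-torsion in the order of $K_X+D$ and using the full rank-$n$ numerical flatness of $\Omega_X(\log D)$ (not merely that of its determinant) to pass from numerical to Frobenius-theoretic data. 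This is essentially the only content of the theorem not already packaged into Theorem~\ref{intro:thm:toric fibration}, and I expect it to carry the bulk of the new technical work.
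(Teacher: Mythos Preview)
Your implications $(1)\Rightarrow(2)$, $(1)\Rightarrow(3)$, and $(3)\Rightarrow(1)$ are fine and match the paper's reasoning (the paper phrases the last as $(3)\Rightarrow(2)$, but the content---apply Theorem~\ref{intro:thm:toric fibration}, then use finiteness of $\pi_1^{\mathrm{\acute et}}$ to kill the abelian base and the cover---is the same).

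The gap is exactly where you locate it: your route $(2)\Rightarrow(3)$ needs global $F$-splitting, and the torsion argument for $K_X+D$ gives no control over $p$-torsion in $\Pic(X)$. This obstacle, however, dissolves once you bring in the result the paper cites as \cite{BPS13}*{Theorem~1.1}: on a smooth projective separably rationally connected variety, every numerically flat bundle is \emph{trivial}. Thus $\Omega_X(\log D)$ is free, so $K_X+D\sim 0$ linearly, with no torsion ambiguity whatsoever; the section of $\omega_X^{1-p}$ with divisor $(p-1)D$ then gives an $F$-splitting by the standard normal-crossing Fedder computation, and your cycle closes. So your strategy can be rescued, but only by importing the freeness statement you did not mention.

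The paper does \emph{not} argue via $(2)\Rightarrow(3)$. It proves $(2)\Rightarrow(1)$ directly (Theorem~\ref{thm:chara toric src}): starting again from freeness of $\Omega_X(\log D)$ via \cite{BPS13}, it shows instead that $H^1(X,B^1_X)=0$. The mechanism is the $\mathrm{dlog}$ map $\Pic(X)\otimes k\to H^1(X,Z^1_X)$ together with Lemma~\ref{lem:surj from pic}, which forces $H^1(Z^1_X)$ to be spanned by Cartier-invariant classes; hence $H^1(C)$ is injective and $H^1(B^1_X)=0$. Since $\Omega_X(\log D)$ is free, the extension class of $0\to B^1_X\to Z^1_X(\log D)\to \Omega_X(\log D)\to 0$ lies in $H^1(B^1_X)^{\oplus n}=0$, the sequence splits, and Lemma~\ref{lem:pre chara} applies. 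This bypasses Theorem~\ref{intro:thm:toric fibration} and its spreading-out to finite fields entirely for the implication $(2)\Rightarrow(1)$; your (completed) route is quicker to state but leans on that heavier machinery, whereas the paper's route is self-contained modulo the $\mathrm{dlog}$/Cartier analysis of Section~4.
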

\noindent
As an application of this theorem, we give a characterization of toric varieties in characteristic zero:
\begin{thm} \label{intro:thm:toric_0} \samepage
Let $X$ be a smooth projective variety over an algebraically closed field of characteristic zero.
Let $D$ be a normal crossing divisor on $X$.
Then the following are equivalent:
\begin{enumerate}
\item $X$ is a toric variety with toric boundary $D$; 
\item $\Omega_X(\mathrm{log}\,D)$ is numerically flat and $X$ is rationally connected.
\end{enumerate}
\end{thm}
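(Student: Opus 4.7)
The implication (1)$\Rightarrow$(2) is immediate: a toric pair $(X,D)$ has $\Omega_X(\log D)\cong\mathcal{O}_X^{\oplus n}$ (hence numerically flat), and every toric variety is rational.

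For (2)$\Rightarrow$(1), the plan is to deduce the statement from the positive-characteristic result, Theorem~\ref{intro:thm:toric_p}, via spreading out and reduction modulo $p$. Choose a finitely generated $\Z$-subalgebra $R$ of the algebraically closed base field $k$ over which $(X,D)$, together with an ample polarization, spreads to a smooth projective family $(\mathcal{X},\mathcal{D})\to\Spec R$ with $\mathcal{D}$ a relative simple normal crossing divisor. After shrinking $\Spec R$, each closed fiber $(X_s,D_s)$ should satisfy the hypotheses of Theorem~\ref{intro:thm:toric_p}\,(2). Via Proposition~\ref{prop:numerical flat}, numerical flatness of $\Omega_{X_s}(\log D_s)$ reduces to semistability of the log cotangent bundle together with the numerical vanishing of $\mathrm{ch}_1$ and $\mathrm{ch}_2$; both propagate through the flat family, since semistability with respect to the polarization is open and Chern-class vanishing specializes. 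Separable rational connectedness of $X_s$, for sufficiently large residue characteristic, follows by spreading out a very free rational curve on $X$ (which exists since $X$ is rationally connected in characteristic zero) and using the rigidity of vector-bundle splittings on $\mathbb{P}^1$ to preserve freeness on almost every fiber.

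By Theorem~\ref{intro:thm:toric_p}, each such $(X_s,D_s)$ is then a toric pair. The combinatorics of the toric boundary (the components of $\mathcal{D}$, their incidence pattern, and orbit-closure structure) is locally constant in the flat family, so after further shrinking $\Spec R$ there is a fixed fan $\Sigma$ defined over $\Z$ such that $(X_s,D_s)\cong(X_\Sigma,D_\Sigma)\times_\Z\kappa(s)$ for every $s$ in the remaining dense open subset. To transfer this isomorphism to the generic fiber, consider the $R$-scheme $\mathscr{I}:=\underline{\mathrm{Isom}}_R\bigl((\mathcal{X},\mathcal{D}),\,(X_\Sigma,D_\Sigma)\times_\Z\Spec R\bigr)$; it is of finite type, and its constructible image in $\Spec R$ contains a dense open subset, hence contains the generic point of $\Spec R$. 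Base-changing to $k$ yields an isomorphism $(X,D)\cong(X_\Sigma,D_\Sigma)_k$, proving that $(X,D)$ is a toric pair with toric boundary $D$.

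The main obstacle is the lifting step: one has to (a) extract a single fan $\Sigma$ from the dense set of toric fibers, using constancy of appropriate combinatorial invariants of $(\mathcal{X},\mathcal{D})\to\Spec R$, and (b) execute the Isom-scheme descent in enough detail to transfer the toric structure back to the generic fiber. A subtler ancillary point that must also be justified is that rational connectedness of $X$ in characteristic zero yields separable rational connectedness of $X_s$ for all but finitely many residue characteristics.
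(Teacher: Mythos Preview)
Your approach is essentially the paper's: spread out over a finitely generated $\Z$-algebra, verify that the closed fibers satisfy the hypotheses of Theorem~\ref{intro:thm:toric_p} (numerical flatness via openness of semistability plus specialization of Chern classes; separable rational connectedness via spreading out a very free curve, which is exactly the argument behind the paper's reference to \cite{BPS13}*{Theorem~1.1}), conclude that the closed fibers are toric, and then lift the toric structure to the generic fiber.

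The only divergence is in the lifting step. The paper does not attempt your Isom-scheme/fan-extraction argument; it simply invokes \cite{AWZ21}*{Corollary~4.1.5}, which is precisely the statement that if the closed fibers of such a family are toric pairs then so is the geometric generic fiber. The obstacle you flag as (a) --- producing a single fan $\Sigma$ that works for a dense set of closed points --- is real: it requires a finiteness statement for smooth projective toric varieties with the given discrete invariants, and without it the Isom-scheme argument does not get off the ground. The cited result in \cite{AWZ21} packages exactly this lifting, so your honest identification of the gap is on target, and the paper's proof is in effect your outline with that gap closed by reference rather than by hand.
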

\noindent
This theorem also follows from a result of Druel and Lo Bianco~\cite{DL22},
but their proof contains an analytic method, so it is completely different from our proof that is purely algebraic.


\begin{ack}
The authors would like to thank the organizers of ``OCAMI Arithmetic and Dynamics Seminar'' where this collaboration started.
They are grateful to Shunsuke Takagi, Kenta Sato, Tasturo Kawakami and Teppei Takamatsu for helpful comments.
The second author was supported by JSPS KAKENHI Grant number JP20J11886 and RIKEN iTHEMS Program.
\end{ack}

\section{Preliminary}

\subsection{Numerical flatness}
In this subsection, we give a characterization of numerically flatness of vector bundles.

\begin{defn}\label{defn:numerically flat}
Let $X$ be a projective variety over a field and $E$ a vector bundle on $X$.
We say that $E$ is \emph{numerically flat} if both $E$ and $E^{\vee}$ are nef.
\end{defn}

\begin{prop}\label{prop:numerical flat}
Let $X$ be a smooth $n$-dimensional projective variety over a perfect field of positive characteristic and $D$ be a normal crossing pair on $X$.
Then $\Omega_X(\log\,D)$ is numerically flat if and only if
there exists an ample Cartier divisor $H$ on $X$ such that $\Omega_X(\log\,D)$ is $H$-semistable and
\[
\mathrm{ch}_1(\Omega_X(\log\,D))\cdot H^{n-1}=\mathrm{ch}_2(\Omega_X(\log\,D)) \cdot H^{n-2}=0.
\]
\end{prop}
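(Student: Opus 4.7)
The specific form $E := \Omega_X(\log D)$ plays no role—the proposition is really a general characterization of numerical flatness for locally free sheaves on $X$. Set $r := \rank(E)$. Both directions combine standard intersection theory (Hodge index and positivity of Schur polynomials in Chern classes of nef bundles) with Langer's positive-characteristic Bogomolov-type inequality for semistable sheaves.

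For the direction $(\Rightarrow)$, assume $E$ is numerically flat and fix any ample $H$. Nefness of both $E$ and $E^{\vee}$ yields $\pm\,c_1(E)\cdot H^{n-1}\geq 0$, hence $\mathrm{ch}_1(E)\cdot H^{n-1}=0$ and $\mu_H(E)=0$. Any quotient $E\twoheadrightarrow Q$ is itself nef, so $\mu_H(Q)\geq 0=\mu_H(E)$; this forces $H$-semistability. For the vanishing of $\mathrm{ch}_2(E)\cdot H^{n-2}$, apply the standard positivity of Schur polynomials in Chern classes of nef bundles to both $E$ and $E^{\vee}$: we obtain $c_2(E)\cdot H^{n-2}\geq 0$ and $(c_1(E)^2-c_2(E))\cdot H^{n-2}\geq 0$. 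Combined with the Hodge index inequality $c_1(E)^2\cdot H^{n-2}\leq 0$ (coming from $c_1(E)\cdot H^{n-1}=0$), the three quantities force $c_1(E)^2\cdot H^{n-2}=c_2(E)\cdot H^{n-2}=0$, whence $\mathrm{ch}_2(E)\cdot H^{n-2}=\tfrac{1}{2}(c_1^2-2c_2)(E)\cdot H^{n-2}=0$.

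For $(\Leftarrow)$, substituting $c_2=\tfrac{c_1^2}{2}-\mathrm{ch}_2$ into the Bogomolov discriminant and using the hypotheses gives
\[
\Delta(E)\cdot H^{n-2}:=\bigl(2rc_2(E)-(r-1)c_1(E)^2\bigr)\cdot H^{n-2}=c_1(E)^2\cdot H^{n-2}-2r\,\mathrm{ch}_2(E)\cdot H^{n-2}=c_1(E)^2\cdot H^{n-2}.
\]
Hodge index forces the right-hand side to be $\leq 0$, while Langer's Bogomolov inequality for $H$-semistable torsion-free sheaves forces it to be $\geq 0$. Hence $\Delta(E)\cdot H^{n-2}=0$, and the equality case of Hodge index then yields $c_1(E)\equiv 0$ numerically. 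One concludes by invoking Langer's characterization of numerical flatness: an $H$-semistable locally free sheaf on $X$ with $c_1\equiv 0$ and $\Delta\cdot H^{n-2}=0$ is numerically flat.

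\textbf{Main obstacle.} Everything except the last sentence is intersection-theoretic bookkeeping once the positivity of Schur polynomials of nef bundles and the Bogomolov--Langer inequality are taken as black boxes. The genuinely deep input is Langer's characterization of numerical flatness in terms of vanishing discriminant; this rests on strong $F$-semistability under Frobenius pullbacks and the $S$-fundamental group scheme machinery, and is where essentially all of the difficulty in positive characteristic resides.
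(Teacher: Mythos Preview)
Your $(\Rightarrow)$ direction is fine and in fact spells out what the paper dispatches with a single citation to Langer. The gap is in $(\Leftarrow)$, and it is exactly where you assert that ``the specific form $E=\Omega_X(\log D)$ plays no role.'' In positive characteristic this is false: Langer's theorem does \emph{not} say that an $H$-semistable bundle with $c_1\equiv 0$ and $\Delta\cdot H^{n-2}=0$ is numerically flat; it says this for \emph{strongly} $H$-semistable bundles (semistability preserved under all Frobenius pullbacks). There exist, over varieties with $\mu_{\max}(\Omega_X)>0$, bundles that are $H$-semistable with vanishing Chern numbers but whose Frobenius pullback is unstable, hence not numerically flat. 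You even flag strong $F$-semistability as the ``deep input'' in your last paragraph, but you never explain why your $E$ is strongly semistable---and for a general $E$ it need not be.

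The paper's proof uses the specific form precisely here. From the hypotheses one has $\mu_{\max}(\Omega_X(\log D))=\mu(\Omega_X(\log D))=0$; the inclusion $\Omega_X\subseteq\Omega_X(\log D)$ then forces $\mu_{\max}(\Omega_X)\le 0$. Langer's earlier result (the slope inequality controlling $\mu_{\max}(F^*E)$ in terms of $\mu_{\max}(E)$ and $\mu_{\max}(\Omega_X)$, see \cite{Lan04}*{p.\,275}) now shows that on such an $X$ every $H$-semistable sheaf is automatically strongly $H$-semistable. Only then can one invoke \cite{Lan12}*{Theorem~2.2} to conclude numerical flatness. So the missing step in your argument is: from $H$-semistability of $\Omega_X(\log D)$ with slope zero, deduce $\mu_{\max}(\Omega_X)\le 0$ via the inclusion $\Omega_X\hookrightarrow\Omega_X(\log D)$, and use this to upgrade semistability to strong semistability.
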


\begin{proof}
The ``only if'' part follows from \cite{Lan12}*{Theorem~2.2}.
Let us show the ``if'' part.
By assumption, we have
$\mu(\Omega_X(\log\,D))=\mu_{max}(\Omega_X(\log\,D))=0$.
Therefore, we obtain $\mu_{max}(\Omega_X) \leq \mu_{max}(\Omega_X(\log\,D)) \leq 0$ by the inclusion $\Omega_X \subseteq \Omega_X(\log\,D)$.
By what mentioned in \cite{Lan04}*{p.\,275}, $\Omega_X(\log\,D)$ is strongly $H$-semistable.
Thus, by \cite{Lan12}*{Theorem 2.2}, $\Omega_X(\log\,D)$ is numerically flat.
\end{proof}


\subsection{Cartier operators on families}
In this subsection, we work over a perfect field $k$ of characteristic $p>0$.
In order to reduce Theorem \ref{intro:thm:toric fibration} to the case when the base field is a finite field, we study the Cartier operators on families.

\begin{defn}
Let $X$ be a variety.
We say that $X$ is \emph{globally $F$-split} if the natural homomorphism
\[
F^{\#} \colon \sO_X \to F_*\sO_X
\]
splits as an $\sO_X$-module homomorphism.
\end{defn}

\begin{defn}\label{definition: relative Frobenius log version}\textup{(cf.~\cite{AWZ21}*{Section 2.3})}
Let $\mu \colon \X \to S$ be a morphism of varieties and $(\X,D)$ a normal crossing pair over $S$.
We consider the following diagram;
\[
\xymatrix{
\X \ar[rd] \ar@/^15pt/[rrd]^-\mu \ar@/_15pt/[rdd]_-F & & \\
& \X' \ar[r]^-{\mu'} \ar[d]_i \ar@{}[rd]|\Box & S \ar[d]^-F \\
& \X \ar[r]_-\mu & S.
}
\]
Then the morphism $\X \to \X'$ is denoted by $F_{\X/S}$ and called the \emph{relative Frobenius} of $\X$ over $S$.
We set 
\begin{eqnarray*}
    Z^1_{\X/S}(\mathrm{log} D)&:=&\mathrm{Ker}((F_{\X/S})_*\Omega_{\X/S}(\mathrm{log} D) \to (F_{\X/S})_*\Omega^2_{\X/S}(\mathrm{log} D)), \\
    B^1_{\X/S}&:=&\mathrm{Im}((F_{\X/S})_*\sO_{\X} \to (F_{\X/S})_*\Omega_{\X/S}).
\end{eqnarray*}
We note that they are coherent sheaves on $\X'$.
\end{defn}

\begin{prop}\label{prop: relative Frobenius log version}
We use the setting and notation in Definition \ref{definition: relative Frobenius log version}.
\begin{enumerate}
    \item If $\sO_{\X'} \to (F_{\X/S})_*\sO_{\X}$ splits, then  for every geometric point $\overline{s}$ of $S$, the fiber $\X_{\overline{s}}$ is globally $F$-split.
    \item If $\mu$ is proper, then the set
    \[
    \{ s \in S \ |\ \X_{\bar{s}} \textup{ is globally $F$-split}\}
    \]
    is constructible, where $\bar{s}$ is the geometric point associated to $s$.
    \item  We have the following exact sequence
    \begin{equation}\label{eq:relative obst}
    \xymatrix{
    0 \ar[r] & B^1_{\X/S} \ar[r] & Z^1_{\X/S}(\mathrm{log} D) \ar[r]^-{C_{\X/S}} & i^*\Omega_{\X/S}(\mathrm{log} D) \ar[r] & 0
    }
    \end{equation}
    by shrinking $S$.
    \item The restriction of $(\ref{eq:relative obst})$ to a geometric fiber $\X_{\overline{s}}$ is isomorphic to the exact sequence
    \[
    \xymatrix{
    0 \ar[r] & B^1_{\X_{\overline{s}}} \ar[r] & Z^1_{\X_{\overline{s}}}(\mathrm{log} D) \ar[r]^-{C_{\X_{\overline{s}}}} & \Omega_{\X_{\overline{s}}}(\mathrm{log} D) \ar[r] & 0
    }
    \]
    by shrinking $S$.
\end{enumerate}

\end{prop}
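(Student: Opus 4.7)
For part (1), the plan is a direct base change. Pull back the putative splitting of $\sO_{\X'} \to (F_{\X/S})_*\sO_{\X}$ along a geometric point $\overline{s} \to S$. Since $\kappa(\overline{s})$ is algebraically closed and hence perfect, the absolute Frobenius on $\Spec \kappa(\overline{s})$ is an isomorphism, so the canonical morphism $\X_{\overline{s}} \to \X'_{\overline{s}}$ is an isomorphism under which the base change of $F_{\X/S}$ becomes the absolute Frobenius on $\X_{\overline{s}}$. The pulled-back splitting therefore witnesses $\X_{\overline{s}}$ as globally $F$-split.

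For part (2), I plan to use Noetherian induction on $S$. By generic flatness, there is a dense open $U \subseteq S$ over which $(F_{\X/S})_*\sO_{\X}$ is flat with formation commuting with base change; using properness of $\mu$ together with cohomology and base change, we may shrink $U$ further so that the pushforward $\mu'_*\mathcal{H}om_{\sO_{\X'}}((F_{\X/S})_*\sO_{\X},\sO_{\X'})$ and its evaluation map to $\mu'_*\sO_{\X'}$ become compatible with base change on $U$. The fiberwise $F$-splitting condition then translates into the image of this evaluation containing a specific section, and such an image condition cuts out an open subset of $U$ by Nakayama. Combined with (1), Noetherian induction on the complement $S \setminus U$ produces the desired constructible structure.

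For parts (3) and (4), I would invoke the classical relative logarithmic Cartier isomorphism, which asserts exactness of
\[
0 \to B^1_{\X_{\overline{s}}} \to Z^1_{\X_{\overline{s}}}(\log D) \xrightarrow{C_{\X_{\overline{s}}}} \Omega_{\X_{\overline{s}}}(\log D) \to 0
\]
on each geometric fiber. To transport this exactness into the relative setting, apply generic flatness simultaneously to each of $(F_{\X/S})_*\sO_{\X}$, $(F_{\X/S})_*\Omega_{\X/S}(\log D)$, $(F_{\X/S})_*\Omega^2_{\X/S}(\log D)$ and $i^*\Omega_{\X/S}(\log D)$. After shrinking $S$ so that all four are $S$-flat, the formations of the kernel $Z^1_{\X/S}(\log D)$, of the image $B^1_{\X/S}$ and of the Cartier map $C_{\X/S}$ each commute with base change; this is exactly the content of (4), and exactness of the global sequence in (3) can then be verified on geometric fibers, where it reduces to the fiberwise Cartier isomorphism.

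The main obstacle lies in (3) and (4): kernels and images of morphisms of coherent sheaves do not in general commute with base change, so the key technical input is simultaneous flatness of all four sheaves appearing in the complex. Once that is arranged after shrinking $S$, the rest of (3) and (4) becomes formal. Parts (1) and (2), by contrast, are comparatively routine consequences of base change together with standard constructibility arguments.
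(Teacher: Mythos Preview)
Your argument for parts~(1), (3) and~(4) is essentially the paper's: the same base change along a geometric point for~(1), and the same generic-flatness/base-change reduction to the fiberwise Cartier isomorphism for~(3) and~(4). The paper is terser, but the content is the same.

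For part~(2) your route diverges. The paper does not pass through $\mu'_*\mathcal{H}om((F_{\X/S})_*\sO_\X,\sO_{\X'})$ or use Noetherian induction. Instead it observes that, since $\mu$ is smooth, $B^1_{\X/S}$ is locally free, so the short exact sequence
\[
0 \longrightarrow \sO_{\X'} \longrightarrow (F_{\X/S})_*\sO_{\X} \longrightarrow B^1_{\X/S} \longrightarrow 0
\]
is classified by a single class $\alpha \in H^1(\X',(B^1_{\X/S})^{\vee})$. By the identification in~(1), the fiber $\X_{\bar s}$ is globally $F$-split exactly when the image $\alpha_{\bar s}\in H^1(\X_{\bar s},(B^1_{\X_{\bar s}})^{\vee})$ vanishes, and the locus where a fixed cohomology class restricts to zero on fibers is constructible by standard semicontinuity. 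Your approach is correct and in fact yields openness on a dense open (hence constructibility via Noetherian induction), but it is heavier: the paper's extension-class argument reaches constructibility in one step without shrinking $S$ or invoking Nakayama. Conversely, your method does not rely on knowing that $B^1_{\X/S}$ is locally free, so it would survive in settings where that fails.
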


\begin{proof}
Let $\overline{s}$ be a geometric point of $S$.
We have a following diagram;
\[
\xymatrix{
\X_{\overline{s}} \ar[rd]^-{F_{\X/S, \overline{s}}} \ar@/^15pt/[rrd] \ar@/_15pt/[rdd]_-F & & \\
& \X'_{\overline{s}} \ar[r] \ar[d]_{i_{\overline{s}}} \ar@{}[rd]|\Box & \mathrm{Spec} (\kappa(\overline{s})) \ar[d]^-F \\
& \X_{\overline{s}} \ar[r] & \mathrm{Spec} (\kappa(\overline{s})).
}
\]
Since $\kappa(\overline{s})$ is a perfect field, $i_{\overline{s}}$ is an isomorphism.
In particular, the homomorphism $F^{\#}$ is a composition of an isomorphism and a homomorphism $F_{\X/S, \overline{s}}^{\#}$, thus we obtain assertion $(1)$.
Next, we consider the exact sequence
\[
0 \longrightarrow \sO_{\X'} \longrightarrow F_{\X/S *}\sO_{\X} \longrightarrow B^1_{\X/S} \longrightarrow 0.
\]
Since $\mu$ is smooth, $B^1_{\X/S}$ is locally free.
Thus, the extension class of above exact sequence is corresponding to an element
\[
\alpha \in H^1(\X,(B^1_{\X/S})^{\vee}) \simeq \mathrm{Ext}^1(B^1_{\X/S},\sO_{\X}).
\]
By the argument in the proof of $(1)$, $\X_{\bar{s}}$ is globally $F$-split if and only if the image $\alpha_{\bar{s}}$ in $H^1(\X_{\bar{s}},(B^1_{\X_{\bar{s}}})^{\vee})$ is zero.
Therefore, the set in assertion $(2)$ is constructible.
By shrinking $S$, for every geometric point $\overline{s}$ of $S$, we have $B^1_{\X/S, \overline{s}} \simeq i_{\overline{s}}^* B^1_{\X_{\overline{s}}}$, $Z^1_{\X/S, \overline{s}}(\mathrm{log} D) \simeq i_{\overline{s}}^* Z^1_{\X_{\overline{s}}}(\mathrm{log} D_{\overline{s}})$, and $\Omega^1_{\X/S, \overline{s}}(\mathrm{log} D) \simeq \Omega^1_{\overline{s}}(\mathrm{log} D_{\overline{s}})$.
Therefore, we can define the relative Cartier operator $C_{\X/S}$ and we obtain assertions (3) and (4).
\end{proof}

\section{Toric fibrations over ordinary abelian varieties}
In this section, we prove Theorem \ref{intro:thm:toric fibration}.
First, we study the splitting of the exact sequence
\[
0 \longrightarrow B^1_X \longrightarrow Z^1_X \longrightarrow \Omega_X \longrightarrow 0
\]
over a finite field.

\begin{defn}\label{defn:toric fibration}\textup{(cf.~\cite{AWZ21}*{Definition 2.1.1, Lemma 2.1.2},)}
Let $S$ be a scheme.
A \emph{toric fibration} over $S$ is a flat $S$-scheme $X$ together with an action of a torus $T$ over $S$ such that \'etale-locally on $S$, there exists isomorphisms $T \simeq \mathbb{G}^n_{m,S}$ and $X \simeq X(\Sigma)_S$ for some rational polyhedral fan $\Sigma \subseteq \R^n$.
Furthermore, a \emph{toric boundary} of a toric fibration $X \to S$ is defined by glueing toric boundaries $D(\Sigma)_S$.
\end{defn}

\begin{lem}\label{lem:pre chara}\textup{(cf.~\cite{AWZ21}*{Theorem 5.1.1})}
Let $(X,D)$ be a normal crossing pair over a perfect field of positive characteristic.
Then the following are equivalent:
\begin{enumerate}
    \item $X$ admits a finite  \'etale cover $\pi \colon Y \to X$ such that $Y$ has a toric fibration over an ordinary abelian variety with toric boundary $f^*D$;
    \item the exact sequence
    \begin{align*}
    \xymatrix{
    0 \ar[r] &
    B^1_X \ar[r] &
    Z^1_X(\mathrm{log}\,D) \ar[r] &
    \Omega_X(\mathrm{log}\,D) \ar[r] &
    0
    }
    \end{align*}
    splits.
\end{enumerate}
\end{lem}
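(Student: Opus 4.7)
The plan is to interpret condition (2) as a form of $F$-liftability of the log pair $(X,D)$ modulo $p^2$ and then appeal to \cite{AWZ21}*{Theorem 5.1.1}, which provides the corresponding characterization in terms of an étale cover by a toric fibration over an ordinary abelian variety. Recall that the log version of the Deligne--Illusie correspondence (treated in \cite{AWZ21}) identifies splittings of the Cartier sequence with lifts of the relative Frobenius of $(X,D)$ to $W_2(k)$, compatible with the boundary.

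For the implication (1) $\Rightarrow$ (2), the plan is to construct the splitting on $Y$ and then descend to $X$. On a toric fibration $\mu: Y \to A$ over an ordinary abelian variety with toric boundary $f^*D$, the sheaf $\Omega_Y(\log f^*D)$ is globally free: $\mu^*\Omega_A$ is trivial since $A$ is an abelian variety, and the relative log cotangent $\Omega_{Y/A}(\log f^*D)$ is trivialized by the torus-invariant $d\log$ sections of the toric fibers. On the abelian factor, ordinariness of $A$ gives a canonical splitting of the Cartier exact sequence; on the toric factor, the logarithmic Cartier operator satisfies $C(d\log t_i) = d\log t_i$ on torus coordinates, producing an analogous canonical splitting. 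Combining the two through the exact sequence
\begin{align*}
0 \to \mu^*\Omega_A \to \Omega_Y(\log f^*D) \to \Omega_{Y/A}(\log f^*D) \to 0
\end{align*}
yields a splitting of the Cartier sequence on $Y$. Because this splitting is functorially determined by the toric-fibration datum, it is equivariant under $\mathrm{Gal}(Y/X)$ after passing to a Galois closure (which is again finite étale over $X$ and again carries the required toric structure), and étale descent then produces the desired splitting on $X$.

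For the implication (2) $\Rightarrow$ (1), the plan is to use the splitting of (2) directly as the hypothesis of \cite{AWZ21}*{Theorem 5.1.1}: via the log Deligne--Illusie correspondence recalled above, the splitting is equivalent to $(X,D)$ admitting a lift of Frobenius mod $p^2$, which by \cite{AWZ21}*{Theorem 5.1.1} implies the existence of a finite étale cover $f: Y \to X$ so that $(Y, f^*D)$ is a toric fibration over an ordinary abelian variety.

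The main obstacle is the Galois equivariance in the descent step for (1) $\Rightarrow$ (2): because the degree of $\pi: Y \to X$ can be divisible by $p$, one cannot naively average a non-equivariant splitting over the Galois group. The argument must therefore exploit the fact that the splitting on $Y$ arising from the toric-over-abelian structure is intrinsic, hence automatically preserved by any automorphism of $Y$ compatible with $(Y, f^*D) \to X$. Once this canonicity is checked, étale descent of the coherent datum finishes the argument.
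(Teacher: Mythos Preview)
Your approach coincides with the paper's: both identify condition~(2) with Frobenius liftability of $(X,D)$ over $W_2(k)$ via the log Deligne--Illusie correspondence and then invoke \cite{AWZ21}*{Theorem~5.1.1}. The paper's proof is in fact nothing more than a reduction to the algebraically closed case followed by a direct citation of that result, so your write-up is a more detailed unpacking of the same argument rather than a different one.

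One point worth tightening in your (1)~$\Rightarrow$~(2) sketch: the claim that the canonical splitting on $Y$ is $\mathrm{Gal}(Y/X)$-equivariant is correct but not automatic from the phrase ``functorially determined by the toric-fibration datum,'' because a deck transformation need not a priori respect the given map $\mu\colon Y\to A$ or the torus action. What makes it work is that (i) smooth proper toric varieties are algebraically simply connected, so any connected finite \'etale cover of $Y$ is of the form $Y\times_A A'$ for an isogeny $A'\to A$, and in particular the Galois closure is again a toric fibration over an ordinary abelian variety; (ii) $\mu$ is the Albanese map of $Y$, hence canonical, so every automorphism of $(Y,f^*D)$ covers an automorphism of $A$; and (iii) the canonical Frobenius lifts on ordinary abelian varieties (Serre--Tate) and on toric pairs (the $p$-th power on torus coordinates) commute with all automorphisms of the respective structures. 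These facts are exactly what \cite{AWZ21} establishes in the course of proving Theorem~5.1.1, so your sketch and the paper's citation ultimately rest on the same ingredients.
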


\begin{proof}
Taking a base change, we may assume the base field is an algebraically closed field.
Then the equivalence follows from the proof of \cite{AWZ21}*{Theorem 5.1.1}.
\end{proof}

\begin{lem} \label{lem:kill}
Let the base field be a finite field.
Let $E$ be a numerically flat vector bundle on a globally $F$-split smooth projective variety $X$. 
If we take $\xi \in H^1(X,E)$, then there exists an \'etale cover $\pi:Y\to X$ such that $\pi^*\xi=0$ 
in $H^1(Y,\pi^*E)$. 
\end{lem}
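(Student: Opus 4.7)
The plan is to reduce to the case where $E$ is trivial and then apply the generalized Artin--Schreier sequence to produce the cover.

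First, I would use an appropriate structure theorem for numerically flat bundles in positive characteristic to find a finite \'etale cover $\pi_0\colon Y_0 \to X$ with $\pi_0^*E \simeq \sO_{Y_0}^{\oplus r}$. On a globally $F$-split variety the Frobenius-kernel part of the Nori fundamental group scheme vanishes, so a numerically flat bundle (being essentially finite) arises from a representation of an \'etale finite group scheme and is thus trivializable on some finite \'etale cover. Since $Y_0$ inherits the hypotheses (smooth, projective, globally $F$-split, over the same finite field), and $\pi_0^*\xi$ becomes an $r$-tuple of classes in $H^1(Y_0,\sO_{Y_0})$, it suffices to kill a single class $\eta \in H^1(Y_0,\sO_{Y_0})$.

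Next, I would combine the global $F$-splitting of $Y_0$ with the finiteness of $H^1(Y_0,\sO_{Y_0})$ to control the Frobenius action. The splitting of $F^{\#}\colon\sO_{Y_0}\to F_*\sO_{Y_0}$ yields a retraction on $H^1$, so the $p$-linear endomorphism $F^*\colon H^1(Y_0,\sO_{Y_0}) \to H^1(Y_0,\sO_{Y_0})$ is injective. But $H^1(Y_0,\sO_{Y_0})$ is a finite-dimensional vector space over the finite base field, hence a finite set, so $F^*$ is a bijection of finite multiplicative order $N\ge 1$; that is, $(F^*)^N = \mathrm{id}$. I would then invoke the \'etale exact sequence
\[
0 \longrightarrow \F_{p^N} \longrightarrow \sO_{Y_0} \xrightarrow{F^N-1} \sO_{Y_0} \longrightarrow 0.
\]
Since $(F^N-1)\eta=0$, the long exact sequence lifts $\eta$ to a class $\tilde\eta \in H^1_{\mathrm{et}}(Y_0, \F_{p^N})$, which classifies a finite \'etale $\F_{p^N}$-torsor $\pi_1\colon Y \to Y_0$ on which $\tilde\eta$ pulls back to zero; naturality of Artin--Schreier then gives $\pi_1^*\eta=0$ in $H^1(Y,\sO_Y)$. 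Performing this for each of the $r$ coordinate classes, taking the fiber product of the resulting covers, and composing with $\pi_0$ yields the desired \'etale cover $\pi\colon Y \to X$ with $\pi^*\xi=0$.

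I expect the main obstacle to be the first step, the reduction to trivial coefficients. One needs the fact that on a globally $F$-split variety, numerical flatness forces \'etale trivializability after a finite \'etale cover; the $F$-splitting is essential here to rule out infinitesimal factors (like $\alpha_p$ or $\mu_p$) that can appear in the Nori fundamental group scheme in positive characteristic. Once that structural input is in place, the Frobenius-bijectivity argument and the Artin--Schreier step are essentially formal.
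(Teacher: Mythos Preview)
Your Artin--Schreier argument in step~(B) is correct, but step~(A) has a genuine gap: it is not true that every numerically flat bundle on a globally $F$-split variety over a finite field becomes trivial on a finite \'etale cover. For a counterexample, take an ordinary elliptic curve $C$ over a (sufficiently large) finite field and a nontrivial line bundle $L\in\Pic^0(C)[p]$. Then $C$ is globally $F$-split and $L$ is numerically flat with $F^*L=L^p\cong\sO_C$, yet $L$ is not \'etale-trivializable: over the algebraic closure every connected finite \'etale cover of $C$ is an isogeny $f\colon C'\to C$ with \'etale kernel, so $\ker\big(f^*\colon\Pic^0(C)\to\Pic^0(C')\big)$ is the Cartier dual of $\ker f$, hence of multiplicative type, and therefore has no nontrivial $p$-torsion $\bar k$-point. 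In other words, $F$-splitting only rules out the $\alpha_p$-type (local--local) quotients of the Nori fundamental group scheme; $\mu_p$-type quotients survive, so essentially finite bundles on $F$-split varieties need not arise from representations of \'etale group schemes.

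The paper's proof avoids this by never attempting to trivialize $E$ alone. It interprets $\xi$ as an extension $0\to E\to G\to\sO_X\to 0$ with $G$ again numerically flat, and applies \cite{PZ19}*{Lemma~2.5} to produce a single \'etale cover $\pi\colon Y\to X$ and an $e\ge1$ for which both $F^{e*}\pi^*E$ and $F^{e*}\pi^*G$ are free. On a projective variety any surjection from a free bundle onto $\sO$ is given by constants and hence splits, so $F^{e*}\pi^*\xi=0$. Global $F$-splitting of $Y$ then makes the Frobenius pullback $H^1(Y,\pi^*E)\to H^1(Y,F^{e*}\pi^*E)$ injective, forcing $\pi^*\xi=0$. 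The crucial difference from your outline is that Frobenius is applied to the extension bundle $G$, not just to $E$, and the $F$-splitting hypothesis enters only at the very end to descend the vanishing of $\xi$ through Frobenius.
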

\begin{proof}
Since $H^1(X,E)\cong \mathrm{Ext}^1(\mathcal O_X, E)$, 
the element $\xi$ corresponds to the exact sequence 
\begin{align*}
\tag{$\xi$}
\xymatrix{
0 \ar[r] & 
E \ar[r] &
G \ar[r] &
\mathcal O_X \ar[r] &
0. 
}
\end{align*}
Note that $G$ is a numerically flat vector bundle.
By \cite{PZ19}*{Lemma~2.5}, there is an \'etale cover $\pi:Y\to X$ such that
${F^e}^*\pi^*E$ and ${F^e}^*\pi^*G$ are free for some $e\ge 1$.
Then the induced exact sequence
\begin{align*}
\tag{${F^e}^*\pi^*\xi$}
\xymatrix{
0 \ar[r] & 
{F^e}^*\pi^*E \ar[r] &
{F^e}^*\pi^*G \ar[r] &
\mathcal O_Y \ar[r] &
0 
}
\end{align*}
splits, i.e., ${F^e}^*\pi^*\xi=0$.
Since $X$ is $F$-split, so is $Y$, and hence $\pi^*\xi=0$.
Note that $\pi^*\xi\in\mathrm{Ext}^1(\mathcal O_Y, \pi^*E)\cong H^1(Y,\pi^*E)$.
\end{proof}
\begin{thm}\label{thm:chara toric fib finite feld}
Let the base field be a finite field.
Let $X$ be a smooth projective variety
and let $D$ be a normal crossing divisor on $X$.
Suppose that $X$ is globally $F$-split and
$\Omega_X(\mathrm{log}\,D)$ is numerically flat.
Then the exact sequence
\begin{align*}
\tag{$\xi$}
\xymatrix{
0 \ar[r] &
B^1_X \ar[r] &
Z^1_X(\mathrm{log}\,D) \ar[r] &
\Omega_X(\mathrm{log}\,D) \ar[r] &
0
}
\end{align*}
splits.
\end{thm}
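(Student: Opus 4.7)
The splitting of the exact sequence $\xi$ is equivalent to the vanishing of its extension class
\[
[\xi] \in \mathrm{Ext}^1_X(\Omega_X(\log D), B^1_X) \cong H^1(X, T_X(-\log D) \otimes B^1_X),
\]
where $T_X(-\log D) := \Omega_X(\log D)^\vee$ and I use that $\Omega_X(\log D)$ is locally free. The goal is therefore to show $[\xi] = 0$.

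The crux is to place $[\xi]$ inside the $H^1$ of a sheaf whose numerical flatness is manifest, so that Lemma~\ref{lem:kill} applies. Because $X$ is globally $F$-split, the defining short exact sequence $0 \to \mathcal{O}_X \to F_*\mathcal{O}_X \to B^1_X \to 0$ splits and gives $F_*\mathcal{O}_X \cong \mathcal{O}_X \oplus B^1_X$. The projection formula combined with the affineness of $F$ then yields $H^1(X, T_X(-\log D) \otimes F_*\mathcal{O}_X) \cong H^1(X, F^*T_X(-\log D))$, and the $F$-split decomposition realizes $H^1(X, T_X(-\log D) \otimes B^1_X)$ as a direct summand of this group. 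The bundle $F^*T_X(-\log D)$ is the Frobenius pullback of the numerically flat $T_X(-\log D)$ and is therefore itself numerically flat. Writing $[\tilde\xi]$ for the image of $[\xi]$ in $H^1(X, F^*T_X(-\log D))$ and applying Lemma~\ref{lem:kill}, I obtain a finite \'etale cover $\pi \colon Y \to X$ with $\pi^*[\tilde\xi] = 0$. Since $\pi$ is \'etale, Frobenius commutes with $\pi^*$, so the $F$-split decomposition of $F_*\mathcal{O}_X$ pulls back to the analogous decomposition of $F_*\mathcal{O}_Y$, and the vanishing of $\pi^*[\tilde\xi]$ transfers summand-by-summand; in particular $\pi^*[\xi] = 0$ in $H^1(Y, T_Y(-\log \pi^*D) \otimes B^1_Y)$, so the Cartier-operator sequence for $(Y, \pi^*D)$ splits.

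I then bootstrap through Lemma~\ref{lem:pre chara}. Applied in the direction $(2) \Rightarrow (1)$ to the pair $(Y, \pi^*D)$, it produces a further finite \'etale cover $p \colon Z \to Y$ such that $Z$ is a toric fibration over an ordinary abelian variety with toric boundary $p^*\pi^*D$. The composition $\pi \circ p \colon Z \to X$ is then a finite \'etale cover of $X$ verifying condition~$(1)$ of Lemma~\ref{lem:pre chara} for $(X, D)$, since $(\pi \circ p)^* D = p^* \pi^* D$; the reverse direction $(1) \Rightarrow (2)$ now forces the original sequence on $X$ to split, i.e.\ $[\xi] = 0$.

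The step I expect to be the main obstacle is the application of Lemma~\ref{lem:kill}, which demands that the coefficient sheaf be numerically flat; applied directly to $T_X(-\log D) \otimes B^1_X$ this would require showing that $B^1_X$ itself is numerically flat, which is not apparent. The plan sidesteps this by enlarging $B^1_X$ to $F_*\mathcal{O}_X$ via the $F$-splitting and then exchanging $F_*$ for $F^*$ through the projection formula, so that $[\xi]$ ends up in $H^1$ of the manifestly numerically flat bundle $F^*T_X(-\log D)$; the remaining descent from $Y$ back to $X$ is handled cleanly by the already-established equivalence in Lemma~\ref{lem:pre chara}.
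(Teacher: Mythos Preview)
Your argument is correct and follows essentially the same route as the paper: lift the extension class through the $F$-split decomposition $F_*\mathcal O_X\cong\mathcal O_X\oplus B^1_X$ and the projection formula into $H^1(X,F^*\Omega_X(\log D)^\vee)$, kill it on a finite \'etale cover via Lemma~\ref{lem:kill}, use $\pi^*F_*\cong F_*\pi^*$ to conclude $\pi^*[\xi]=0$, and then invoke Lemma~\ref{lem:pre chara} to descend. Your explicit two-step use of Lemma~\ref{lem:pre chara} ($(2)\Rightarrow(1)$ on $Y$, then $(1)\Rightarrow(2)$ on $X$) is precisely what the paper's terse final sentence unpacks to.
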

\begin{proof}
The exact sequence ($\xi$) corresponds to an element
$$
\xi \in \mathrm{Ext}^1(\Omega_X(\mathrm{log}\,D),B_X^1)
\cong H^1(B_X^1\otimes\Omega_X(\mathrm{log}\,D)^\vee).
$$
Since $X$ is globally $F$-split, $F_*\sO_X \to B^1_X$ is a splitting surjection,
so there is
\begin{align*}
\xi'\in H^1(X, (F_*\mathcal O_X) \otimes \Omega_X(\mathrm{log}\,D)^\vee)
& \cong H^1\big(X, F_*(F^*\Omega_X(\mathrm{log}\,D)^\vee)\big)
\\ & \cong H^1(X, F^*\Omega_X(\mathrm{log}\,D)^\vee)
\end{align*}
such that $\alpha(\xi')=\xi$, where
$$
\alpha:H^1(X, F^*\Omega_X(\mathrm{log}\,D)^\vee)
\to H^1(X, B_X^1 \otimes \Omega_X(\mathrm{log}\,D)^\vee)
$$
is the induced map from the splitting surjection $F_*\mathcal O_X \to B_X^1$.
Since $F^*\Omega_X(\mathrm{log}\,D)$ is numerically flat,
by Lemma~\ref{lem:kill}, there is an \'etale cover $\pi:Y\to X$ such that
$\pi^*\xi'=0$.
As $F_*\pi^* \cong \pi^*F_*$, we have the commutative diagram
$$
\xymatrix{
H^1(X, F^*\Omega_X(\mathrm{log}\,D)^\vee) \ar[r]^-{\pi^*} \ar[d]_-\cong
\ar@/_90pt/[dd]_-\alpha &
H^1(Y, \pi^*F^*\Omega_X(\mathrm{log}\,D)^\vee) \ar[d]^-\cong
\ar@/^90pt/[dd]^-\beta \\
H^1(X, F_*(F^*\Omega_X(\mathrm{log}\,D)^\vee)) \ar[r]^-{\pi^*} \ar[d] &
H^1\big(Y, \pi^*F_*(F^*\Omega_X(\mathrm{log}\,D)^\vee)\big) \ar[d] \\
H^1(X, B_X^1\otimes \Omega_X(\mathrm{log}\,D)^\vee) \ar[r]^-{\pi^*} &
H^1\big(Y, \pi^*(B_X^1 \otimes\Omega_X(\mathrm{log}\,D)) \big),
}
$$
where $\beta$ is the induced morphism.
Hence, $\pi^*\xi=\pi^*\alpha(\xi')=\beta(\pi^*\xi') =\beta(0) =0$,
which means that the exact sequence
\begin{align*}
\tag{$\pi^*\xi$}
\xymatrix{
0 \ar[r] &
B^1_Y \ar[r] &
Z^1_Y(\mathrm{log}\,\pi^*D) \ar[r] &
\Omega_Y(\mathrm{log}\,\pi^*D) \ar[r] &
0
}
\end{align*}
splits.
By Lemma \ref{lem:pre chara}, $\xi$ also splits.
\end{proof}

\begin{thm}\label{thm: chara of toric fib}
Let $X$ be a smooth projective variety over an algebraically closed field of characteristic $p>0$
and let $D$ be a normal crossing divisor on $X$.
Suppose that $X$ is globally $F$-split and
$\Omega_X(\mathrm{log}\,D)$ is numerically flat.
Then $X$ admits a finite \'etale cover $\pi \colon Y \to X$ such that  $Y$ has a toric fibration over an ordinary Abelian variety with toric boundary $f^*D$. 
\end{thm}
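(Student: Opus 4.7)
By Lemma~\ref{lem:pre chara}, it suffices to produce a splitting of the exact sequence
\[
0 \to B^1_X \to Z^1_X(\log D) \to \Omega_X(\log D) \to 0,
\]
since the toric fibration then follows formally from that lemma. The plan is to reduce to the finite-field case of Theorem~\ref{thm:chara toric fib finite feld} by spreading $(X,D)$ out to a smooth projective family and invoking the relative framework of Proposition~\ref{prop: relative Frobenius log version}. Concretely, I would choose a finitely generated $\F_p$-subalgebra $R \subset k$ supporting a smooth projective model $\mu\colon \X \to S := \Spec R$, a relative simple normal crossing divisor $\mathcal{D}$ descending $D$, a relatively ample $\mathcal H$ descending the given ample $H$, and a splitting of the relative Frobenius $\sO_{\X'} \to (F_{\X/S})_*\sO_{\X}$ descending the splitting of $F^{\#}$ on $X$; by Proposition~\ref{prop: relative Frobenius log version}(1), every geometric fiber of $\mu$ is then globally $F$-split. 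Proposition~\ref{prop:numerical flat} expresses numerical flatness of $\Omega_X(\log D)$ as $H$-semistability together with the vanishing of two intersection numbers; the intersection numbers are locally constant in flat families on the irreducible base $S$, and slope semistability is an open condition in flat families, so after shrinking $S$ every geometric fiber $\Omega_{\X_{\overline{s}}}(\log \mathcal{D}_{\overline{s}})$ is again numerically flat. A further shrink activates Proposition~\ref{prop: relative Frobenius log version}(3)(4), giving a relative exact sequence
\[
0 \to B^1_{\X/S} \to Z^1_{\X/S}(\log \mathcal{D}) \to i^*\Omega_{\X/S}(\log \mathcal{D}) \to 0
\]
whose restriction to every geometric fiber is the absolute sequence there.

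Because $R$ is a finitely generated $\F_p$-algebra, every closed point $s \in S$ has finite residue field, so Theorem~\ref{thm:chara toric fib finite feld} applies to each closed-point geometric fiber $\X_{\overline{s}}$ and shows that the restriction of the relative sequence to $\X_{\overline{s}}$ splits. Let
\[
\widetilde{\xi} \in H^1\bigl(\X',\,\mathscr{H}om(i^*\Omega_{\X/S}(\log \mathcal{D}),\, B^1_{\X/S})\bigr)
\]
denote the class of the relative sequence. After shrinking $S$ once more so that the coherent sheaf $R^1\mu'_*\mathscr{H}om(i^*\Omega_{\X/S}(\log \mathcal{D}), B^1_{\X/S})$ is locally free on $S$ and compatible with arbitrary base change (standard cohomology-and-base-change, using Proposition~\ref{prop: relative Frobenius log version}(4) to identify the fibers of this sheaf with the fiberwise $H^1$), the class $\widetilde{\xi}$ becomes a global section of a locally free sheaf on the Jacobson affine scheme $S$ whose value at every closed-point fiber vanishes; such a section must be zero, so $\widetilde{\xi} = 0$. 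Pulling back along $\Spec k \to S$ then yields the desired splitting of the original sequence on $X$, and Lemma~\ref{lem:pre chara} converts this splitting into a toric fibration on a finite \'etale cover of $X$.

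The main obstacle is the cohomology-and-base-change step: one must verify that after appropriate shrinking the higher direct image controlling $\widetilde{\xi}$ is locally free and that its fiber at each closed $s \in S$ identifies with the corresponding $H^1$ on $\X_{\overline{s}}$, for only then does the fiberwise vanishing on the dense set of closed points force $\widetilde{\xi}$ itself to vanish. The supporting spreading-out work --- arranging that global $F$-splitting, the two Chern-character constraints of Proposition~\ref{prop:numerical flat}, and the relative Cartier sequence all hold on a common dense open of $S$ --- is a chain of standard openness and constancy arguments, but it has to be coordinated carefully so that shrinking to secure one property does not invalidate another.
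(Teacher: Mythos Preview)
Your proof is correct and follows the paper's approach almost exactly: spread out over a finite-type $\F_p$-scheme, apply Theorem~\ref{thm:chara toric fib finite feld} at each closed point, and use cohomology-and-base-change to deduce that the relative extension class---hence the class over $k$---vanishes, then conclude via Lemma~\ref{lem:pre chara}. The only cosmetic difference is that you descend the Frobenius splitting directly and invoke Proposition~\ref{prop: relative Frobenius log version}(1), whereas the paper instead uses the constructibility assertion of Proposition~\ref{prop: relative Frobenius log version}(2) to secure global $F$-splitting on closed fibers.
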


\begin{proof}
There exists a smooth affine variety $S$ over a finite field, a smooth projective morphism $\mu \colon \mathcal{X} \to S$, and a reduced divisor $\mathcal{D}$ on $\X$ such that
\begin{enumerate}
    \item $(\X,\mathcal{D})$ is normal crossing over $S$,
    \item $\kappa(\eta) \subset k$, where $\eta \in S$ is the generic point,
    \item $\mathcal{X} \times_S \mathrm{Spec}(k) \simeq X$,
    \item for every closed point $s \in S$,  $X_s$ is globally $F$-split, 
    \item for every closed point $s \in S$, $\Omega_{X_s}(\log\,D_s)$ is numerically flat, and
    \item for every closed point $s \in S$, the restriction map
    \[
    H^1(\X',i^*\Omega_{\X/S}(\log\,\mathcal{D})^{\vee} \otimes B_{\X/S})\otimes \kappa(s) \to H^1(\X_s,\Omega_{X_s}(\log\,D_s)^\vee \otimes B^1_{\X_s})
    \]
    is an isomorphism,
\end{enumerate}
where we use the notation in Definition \ref{definition: relative Frobenius log version}, (4) follows from Proposition \ref{prop: relative Frobenius log version} (2), and (5) follows from Proposition \ref{prop:numerical flat} and the openness of semistability \cite{HL10}*{Proposition 3.1}.
We denote the extension class of the exact sequence (\ref{eq:relative obst}) in Proposition \ref{prop: relative Frobenius log version} by $\xi_{\X} \in H^1(\X',i^*\Omega_{\X/S}(\log\,\mathcal{D})^{\vee} \otimes B_{\X/S})$.
By Proposition \ref{prop: relative Frobenius log version} (4), the image of $\xi_{\X}$ by the restriction map is the extension class of the exact sequence
\[
\xymatrix{
0 \ar[r] & B^1_{\X_{s}} \ar[r] & Z^1_{\X_{s}}(\log\,D_s) \ar[r]^-{C_{\X_{s}}} & \Omega_{\X_{s}}(\log\,D_s) \ar[r] & 0
}
\]
for every closed point $s \in S$ by shrinking $S$.
Since $\X_s$ is a globally $F$-split variety over a finite field and the log cotangent bundle is numerical flat, $\xi_{\X_s}=0$ by Theorem \ref{thm:chara toric fib finite feld}.
Therefore, we have $\xi_{\X}=0$, and in particular, $\xi_{X}=0$ by the condition (3).
By Lemma \ref{lem:pre chara}, we obtain the desired result.
\end{proof}

\begin{proof}[Proof of Theorem \ref{intro:thm:toric fibration}]
The implication (2) $\Rightarrow$ (1)  follows from Theorem \ref{thm: chara of toric fib}.
We assume condition (1) in the statement of Theorem \ref{intro:thm:toric fibration}.
Then $\Omega_X(\log\,D)$ is numerically flat.
By Lemma \ref{lem:pre chara} and the proof of \cite{AWZ21}*{Theorem 5.1.1}, $X$ is globally $F$-split.
\end{proof}

\begin{proof}[Proof of Theorem \ref{intro:thm:characterization of abelian varieties}]
The implication (1) $\Rightarrow$ (2) follows from Theorem \ref{intro:thm:toric fibration}.
We assume condition (2).
Then by the proof of Theorem \ref{thm: chara of toric fib}, the exact sequence
\[
0 \longrightarrow B^1_X \longrightarrow Z^1_X \longrightarrow \Omega_X \longrightarrow 0
\]
splits.
By \cite{metha-srinivas}*{Theorem~2}, $X$ is an \'etale quotient of an ordinary abelian variety.
\end{proof}

\begin{proof}[Proof of Theorem~\ref{intro:thm:decomposition}]
Thanks to \cite{KW20}*{Theorem~1.7}, we have a smooth morphism
$\psi:X\to M$ with $\psi_*\mathcal O_X \cong \mathcal O_M$ such that
\begin{itemize}
\item $\psi$ is the MRCC fibration of $X$,
\item every fiber of $\psi$ is a smooth separably rationally connected Fano variety with nef tangent bundle, and 
\item $M$ is a smooth projective variety with numerically flat tangent bundle.
\end{itemize}
Since $X$ is globally $F$-split and $\psi_*\mathcal O_X\cong \mathcal O_M$, we see that $M$ is also globally $F$-split. 
Therefore, by Theorem~\ref{intro:thm:characterization of abelian varieties}, we find an \'etale cover $\pi:A\to M$ from an ordinary abelian variety. 
Put $Y:=X\times_M A$ and let $f:Y\to X$ (resp. $\varphi:Y\to A$) denote the first (resp. second) projection. 
Then $f$ is \'etale, and one can check that $\varphi$ is the MRCC fibration of $Y$. 
Since $X$ is globally $F$-split, so is $Y$.
Then by \cite{Eji19w}*{Proposition~5.11}, we see that $\varphi$ is (locally) $F$-split, so \cite{Eji19w}*{Proposition~5.7} tells us that every fiber of $\varphi$ is globally $F$-split.
\end{proof}

\section{separably rationally connected case}
In this section, we prove Theorem \ref{intro:thm:toric_p}.
First, we recall properties of the residue map.

\begin{prop}\label{prop:exact sequence of log differential}
Let $(X,D)$ be a normal crossing pair over a field $k$.
Then we have the exact sequence
\[
0 \longrightarrow \Omega_X \longrightarrow \Omega_X(\log\,D) \longrightarrow \nu_*\sO_{D^n} \longrightarrow 0,
\]
where $\nu \colon D^n \to D$ is the normalization of $D$.
\end{prop}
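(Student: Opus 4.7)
The statement is étale-local on $X$, so the plan is to construct the residue morphism intrinsically and then verify exactness on a standard chart. For each branch $D_i$ of $D$, which is smooth by the normal crossing assumption, I would define a residue map $\mathrm{res}_{D_i} \colon \Omega_X(\mathrm{log}\,D) \to \sO_{D_i}$ by choosing étale-locally a defining equation $t$ of $D_i$, writing a log differential uniquely as $f\frac{dt}{t} + \omega$ with $\omega \in \Omega_X(\mathrm{log}(D-D_i))$, and sending it to $f|_{D_i}$. Independence of the choice of $t$ relies on the identity $\frac{d(ut)}{ut} = \frac{du}{u} + \frac{dt}{t}$ for a unit $u$, together with the observation that $\frac{du}{u} = u^{-1}\,du$ is a regular form, hence lies in $\Omega_X \subseteq \Omega_X(\mathrm{log}(D-D_i))$, so its contribution is absorbed into $\omega$.

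Next, I would assemble these branch-wise residues into a single $\sO_X$-linear map $\mathrm{res} \colon \Omega_X(\mathrm{log}\,D) \to \nu_*\sO_{D^n}$. This uses the fact that $D^n$ is the disjoint union of the (smooth) branches $D_i$, so that $\nu_*\sO_{D^n}$ decomposes étale-locally as $\bigoplus_i \sO_{D_i}$. The inclusion $\Omega_X \hookrightarrow \Omega_X(\mathrm{log}\,D)$ clearly factors through the kernel of $\mathrm{res}$, since any regular form has no $\frac{dt}{t}$ component in the above decomposition.

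For exactness, I reduce to the standard étale chart $X = \Spec k[x_1,\ldots,x_n]$ with $D = V(x_1\cdots x_r)$. On such a chart $\Omega_X$ is free on $dx_1,\ldots,dx_n$, the bundle $\Omega_X(\mathrm{log}\,D)$ is free on $\frac{dx_1}{x_1},\ldots,\frac{dx_r}{x_r},dx_{r+1},\ldots,dx_n$, and the residue map takes the explicit form
\[
\sum_{i=1}^r f_i\frac{dx_i}{x_i} + \sum_{j=r+1}^n g_j\,dx_j \;\longmapsto\; (f_1|_{D_1},\ldots,f_r|_{D_r}).
\]
Injectivity of $\Omega_X \hookrightarrow \Omega_X(\mathrm{log}\,D)$ and surjectivity of $\mathrm{res}$ onto $\bigoplus_i \sO_{D_i}$ are immediate, and the kernel of $\mathrm{res}$ is generated over $\sO_X$ by $x_i\cdot\frac{dx_i}{x_i} = dx_i$ for $i \leq r$ together with $dx_j$ for $j > r$, which is precisely $\Omega_X$.

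The main obstacle is not the exactness verification in coordinates, which is routine, but the intrinsic well-definedness of the residue map: one must check that the branch-wise recipe produces an honest global morphism of sheaves, independent of the local choice of defining equations. This is handled by the unit computation above, after which the identification $\nu_*\sO_{D^n} = \bigoplus_i \sO_{D_i}$ (valid étale-locally because the branches are smooth and meet transversally) glues everything together.
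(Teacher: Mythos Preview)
Your proposal is correct and follows essentially the same approach as the paper: define the residue map, pass \'etale-locally to the simple normal crossing case, and verify exactness on the standard chart $\Spec k[x_1,\ldots,x_n]$ with $D=V(x_1\cdots x_r)$. The only cosmetic difference is that the paper presents the residue map via the formula $\phi^{-1}d\phi \mapsto (\ord_{D_i}(\phi))_i$ and then invokes \'etale descent, whereas you spell out the well-definedness check through the unit identity $\frac{d(ut)}{ut}=\frac{du}{u}+\frac{dt}{t}$; both describe the same morphism.
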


\begin{proof}
Let $D=D_1+\cdots+D_r$ be the irreducible decomposition of $D$.
Let $\nu_i \colon D_i^n \to D_i$ be the normalization for every $i$.
We note that $\nu_*\sO_D \simeq \bigoplus_{1 \leq i \leq r} (\nu_i)_*\sO_{D_i^n}$.
If $(X,D)$ is simple normal crossing, then the $\sO_X$-module homomorphism 
\[
\Omega_X(\log\,D) \longrightarrow \bigoplus_{1 \leq i \leq r} \sO_{D_i};\ \phi^{-1}d\phi \mapsto (\mathrm{ord}_{D_i}(\phi))_i
\]
induces the desired exact sequence and the map commutes with \'etale pullbacks.
Therefore, by \'etale descent, we obtain the exact sequence
\[
0 \longrightarrow \Omega_X \longrightarrow \Omega_X(\log\,D) \longrightarrow \nu_*\sO_{D^n} \longrightarrow 0,
\]
as desired.
\end{proof}

\begin{rmk}\label{rmk:exact log diff explicit}
Let $D:=D_1+\cdots+D_r$ be the irreducible decomposition and we assume that there exists $\phi_i \in H^0(X,\sO_X)$ such that $D_i=\mathrm{div}(\phi_i)$.
The map $\Omega_X(\log\,D) \to \nu_*\sO_{D^n}$ in the statement of Proposition~\ref{prop:exact sequence of log differential} is denoted by $\rho$.
By the proof of Proposition~\ref{prop:exact sequence of log differential},
$\rho(\phi_id\phi_i)$ is an element of $\bigoplus_{1 \leq l \leq r} (\nu_l)_*\sO_{D_l^n}$ whose $i$-th component is one and other components are zero, where $\nu_l \colon D_l^n \to D_l$ is the normalization.
\end{rmk}

\begin{prop}\label{prop:unit and differential}
Let $(X,D)$ be a normal crossing pair over a field $k$ and $U:=X \backslash D$.
Let $D:=D_1+\cdots+D_r$ be the irreducible decomposition and $\nu \colon D^n \to D$ the normalization.
\begin{enumerate}
    \item Then we obtain the following commutative diagram of exact sequences;
    \begin{equation} \label{diagram:unit and differential}
    \begin{tikzcd}
    0 \arrow{r} & \sO_X^* \arrow{r} \arrow{d}{\alpha} & \sO_U^* \arrow{r} \arrow{d}{\beta} & \sO_U^*/\sO_X^* \arrow{r} \arrow{d}{\gamma} & 0 \\
    0 \arrow{r} & \Omega_X \arrow{r} & \Omega_X(\log\,D) \arrow{r} & \nu_*\sO_{D^n} \arrow{r} & 0.
    \end{tikzcd}
    \end{equation}
    \item If $H^0(D_i^n,\sO_{D_i^n})=k$ for every $1 \leq i \leq r$, then the map
    \[
    H^0(\sO_U^*/\sO_X^*) \otimes_{\Z} k \longrightarrow H^0(D^n,\sO_{D^n})
    \]
    induced by $\gamma$ is surjective.
\end{enumerate}
\end{prop}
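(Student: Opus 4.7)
For part (1), I would take the three vertical arrows to be (variants of) the logarithmic derivative $\phi \mapsto \phi^{-1}d\phi$. For $\alpha$, the assumption $\phi \in \mathcal{O}_X^*$ ensures $\phi^{-1} \in \mathcal{O}_X$, so $\phi^{-1}d\phi$ lies in $\Omega_X$. For $\beta$, a local section of $\mathcal{O}_U^*$ has the \'etale-local form $u\prod_i \phi_i^{a_i}$ with $u$ a local unit and $\phi_i$ a local equation of $D_i$, giving
\[
\phi^{-1}d\phi = u^{-1}du + \sum_i a_i \phi_i^{-1}d\phi_i \in \Omega_X(\log D),
\]
so $\beta$ is well-defined, and $\gamma$ is the induced map on quotients. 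The top row is exact by construction of $\mathcal{O}_U^*/\mathcal{O}_X^*$, the bottom row is Proposition~\ref{prop:exact sequence of log differential}, and commutativity of both squares is immediate since $\beta$ restricts to $\alpha$ on $\mathcal{O}_X^* \subset \mathcal{O}_U^*$, while the image of $\alpha$ lies in $\Omega_X \subset \Omega_X(\log D)$.

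For part (2), the plan is to compute both source and target of the induced map explicitly. \'Etale-locally the assignment $u\prod_i \phi_i^{a_i} \mapsto (a_i)_i$ gives an isomorphism $\mathcal{O}_U^*/\mathcal{O}_X^* \cong \bigoplus_i (\nu_i)_*\mathbb{Z}_{D_i^n}$, where $\nu_i \colon D_i^n \to D_i$ is the normalization; this identification is canonical and glues to a global one. The hypothesis $H^0(D_i^n,\mathcal{O}_{D_i^n})=k$ forces each $D_i^n$ to be connected as a scheme, so
\[
H^0(X,\mathcal{O}_U^*/\mathcal{O}_X^*) \cong \bigoplus_i H^0(D_i^n, \mathbb{Z}) \cong \mathbb{Z}^r,
\]
with the $i$-th generator represented locally by a local equation of $D_i$. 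On the other side, $H^0(D^n,\mathcal{O}_{D^n}) = \bigoplus_i H^0(D_i^n,\mathcal{O}_{D_i^n}) \cong k^r$. By the local description in Remark~\ref{rmk:exact log diff explicit}, $\gamma$ sends the $i$-th generator to the standard basis vector $e_i$, so after $-\otimes_{\mathbb{Z}} k$ the induced map $k^r \to k^r$ is the identity, in particular surjective (and indeed an isomorphism).

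The one point requiring care is the independence from the choice of local equation: if $\phi_i$ and $\phi_i'=u\phi_i$ are two local equations for $D_i$ with $u$ a unit, then $(\phi_i')^{-1}d\phi_i' - \phi_i^{-1}d\phi_i = u^{-1}du$ lies in $\Omega_X$, which vanishes in $\nu_*\mathcal{O}_{D^n}$ by exactness of the bottom row of Proposition~\ref{prop:exact sequence of log differential}. This guarantees that the image of the global generator $[D_i]$ in $\nu_*\mathcal{O}_{D^n}$ is genuinely the constant section $1$ on $D_i^n$, making the identification with $e_i$ well-defined. The remaining ingredient, namely the étale descent of the stalkwise identification of $\mathcal{O}_U^*/\mathcal{O}_X^*$ with $\bigoplus_i (\nu_i)_*\mathbb{Z}_{D_i^n}$, is routine since the assignment $u\prod_i\phi_i^{a_i}\mapsto(a_i)_i$ is canonical modulo units.
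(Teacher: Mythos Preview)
Your proof is correct and follows essentially the same approach as the paper: define $\alpha,\beta$ via the logarithmic derivative $\phi\mapsto\phi^{-1}d\phi$, let $\gamma$ be the induced map on quotients, and for part~(2) produce for each $i$ a global section of $\mathcal O_U^*/\mathcal O_X^*$ (represented locally by an equation of $D_i$) whose image under $\gamma$ is the basis vector $e_i\in H^0(D^n,\mathcal O_{D^n})\cong k^r$.

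One small caveat: your sheaf identification $\mathcal O_U^*/\mathcal O_X^*\cong\bigoplus_i(\nu_i)_*\mathbb Z_{D_i^n}$ is valid \'etale-locally, but in the Zariski topology it can fail when $D$ is only normal crossing and not simple normal crossing (at a self-node of some $D_i$ the Zariski stalk of the left side is $\mathbb Z$ while the right side has stalk $\mathbb Z^2$, since the two analytic branches need not separate Zariski-locally). This does not affect your conclusion, because the global sections $\overline{\phi}_i$ you describe exist already as Zariski sections---they glue since any two local equations of $D_i$ differ by a unit---and that is all that is needed. The paper's proof sidesteps this subtlety by directly gluing the $\overline{\phi}_{ij}$ without ever asserting a sheaf-level isomorphism or computing $H^0(\mathcal O_U^*/\mathcal O_X^*)$ in full; your stronger assertion that $H^0\cong\mathbb Z^r$ is true but requires either working in the \'etale topology throughout or a separate direct argument.
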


\begin{proof}
First, we construct group maps $\alpha$ and $\beta$.
The map $\alpha$ is defined by $\alpha(\phi)=\phi^{-1}d\phi$, then it is group homomorphism.
Furthermore, if $\phi$ is a local section of $\sO_U^*$, then $\phi^{-1}d\phi$ defines a local section of $\Omega_X(\log\,D)$, thus we can define a group homomorphism $\beta$ by $\beta(\phi)=\phi^{-1}d\phi$.
By construction, we obtain the commutative diagram
\begin{equation*}
\begin{tikzcd}
\sO_X^* \arrow{r} \arrow{d}{\alpha} & \sO_U^* \arrow{d}{\beta} \\
\Omega_X \arrow{r} & \Omega_X(\log\,D).
\end{tikzcd}
\end{equation*}
Therefore, $\beta$ induces the map $\gamma$ fitting the commutative diagram (\ref{diagram:unit and differential}).

Next, we prove the assertion (2).
Let $\nu_i \colon D_i^n \to D_i$ be a normalization for every $1 \leq i \leq r$, then we have $\nu_*\sO_{D^n} \simeq \bigoplus_{1 \leq i \leq r}(\nu_i)_*\sO_{D_i^n}$.
Let $e_i$ be an element of $\bigoplus_{1 \leq l \leq r}H^0(D_l^n,\sO_{D_l^n})$ whose $i$-th component is one and the other components are zero.
Then it is enough to show that the image of $H^0(\gamma)$ contains $e_i$ for all $i$ by assumption.
Let $\{V_j\}$ be an open affine covering of $X$ such that $D_i|_{V_j}=\mathrm{div}(\phi_{ij})$ for some $\phi_{ij} \in \sO_X(V_j)$, then $\phi_{ij} \in \sO_U^*(V_j)$.
The image of $\phi_{ij}$ in $\sO_U^*/\sO_X^*(V_j)$ is denoted by $\overline{\phi_{ij}}$.
By Remark \ref{rmk:exact log diff explicit}, we have $\gamma(\overline{\phi_{ij}})=e_i|_{V_j}$.
Since $\phi_{ij}$ is a generator of $D_i|_{V_j}$, $\{\overline{\phi_{ij}}\}$ defines a global section $\bar{\phi}_i$ of $\sO_X^*/\sO_U^*$.
Therefore, we have $H^0(\gamma)(\bar{\phi}_i)=e_i$, as desired.
\end{proof}

\begin{lem}\label{lem:surj from pic}
Let $X$ be a smooth projective variety over an algebraically closed field $k$ of characteristic $p >0$ and $D$ a normal crossing divisor on $X$.
We consider the group homomorphism
\[
\alpha' \colon \Pic (X) \otimes_{\Z} k \longrightarrow H^1(X,\Omega_X)
\]
induced by 
\[
\sO_X^* \longrightarrow \Omega_X\ ;\ \phi \mapsto \phi^{-1}d\phi.
\]
If $H^1(X,\Omega_X(\log\,D))=0$, then $\alpha'$ is surjective.
\end{lem}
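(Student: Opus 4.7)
My plan is to obtain the surjectivity by a diagram chase on the two long exact sequences coming from the commutative diagram (\ref{diagram:unit and differential}) of Proposition~\ref{prop:unit and differential}. The three essential ingredients will be: (i) the hypothesis $H^1(X, \Omega_X(\log D)) = 0$, which forces the connecting homomorphism on the bottom row to be surjective; (ii) Proposition~\ref{prop:unit and differential}(2), which provides surjectivity on the left column after tensoring with $k$; and (iii) naturality of the connecting homomorphism, which glues the two pieces together.

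First I would verify the hypothesis of Proposition~\ref{prop:unit and differential}(2): each $D_i$ is a prime divisor on the projective variety $X$, so its normalization $D_i^n$ is a projective, normal, irreducible variety over the algebraically closed field $k$, whence $H^0(D_i^n, \sO_{D_i^n}) = k$. Applying Proposition~\ref{prop:unit and differential}(2) then yields that the $k$-linear map $H^0(\sO_U^*/\sO_X^*) \otimes_\Z k \to H^0(\nu_* \sO_{D^n})$ induced by $\gamma$ is surjective.

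Next I would take the long exact sequences of both rows of (\ref{diagram:unit and differential}). The bottom row produces a connecting homomorphism
\[
\delta_2 \colon H^0(\nu_* \sO_{D^n}) \longrightarrow H^1(X, \Omega_X),
\]
which is surjective by the assumption $H^1(X, \Omega_X(\log D)) = 0$. The top row yields $\delta_1 \colon H^0(\sO_U^*/\sO_X^*) \to H^1(X, \sO_X^*) = \Pic(X)$, and by naturality of the connecting homomorphism applied to (\ref{diagram:unit and differential}) the square
\[
\begin{CD}
H^0(\sO_U^*/\sO_X^*) @>{\delta_1}>> \Pic(X) \\
@VV{H^0(\gamma)}V @VV{H^1(\alpha)}V \\
H^0(\nu_* \sO_{D^n}) @>{\delta_2}>> H^1(X, \Omega_X)
\end{CD}
\]
commutes. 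Since the right-hand column lands in a $k$-vector space, it factors uniquely through the $k$-linear map $\alpha' \colon \Pic(X) \otimes_\Z k \to H^1(X, \Omega_X)$, and tensoring the left-hand column with $k$ over $\Z$ preserves commutativity of the square.

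Finally the conclusion is immediate by chasing: given $\eta \in H^1(X, \Omega_X)$, surjectivity of $\delta_2$ produces $x \in H^0(\nu_* \sO_{D^n})$ with $\delta_2(x) = \eta$; step one above produces $z \in H^0(\sO_U^*/\sO_X^*) \otimes_\Z k$ mapping to $x$; then $\alpha'((\delta_1 \otimes_\Z k)(z)) = \delta_2(x) = \eta$ by the tensored commutative square. No single step here is difficult, and there is no real obstacle. The only point requiring care is distinguishing the $\Z$-linear boundary map $\delta_1$ from its $k$-linear extension and verifying that all arrows descend correctly across the tensor product $- \otimes_\Z k$; once that bookkeeping is done, the result follows immediately.
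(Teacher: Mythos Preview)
Your proof is correct and follows essentially the same route as the paper's own argument: both use the commutative square of connecting maps arising from Proposition~\ref{prop:unit and differential}, combine the surjectivity of $\delta_2$ (from $H^1(X,\Omega_X(\log D))=0$) with the surjectivity of $H^0(\gamma)\otimes_{\Z}k$ (from Proposition~\ref{prop:unit and differential}(2)), and conclude that $\alpha'=H^1(\alpha)\otimes_{\Z}k$ is surjective. You supply a bit more detail---explicitly verifying $H^0(D_i^n,\sO_{D_i^n})=k$ and spelling out the diagram chase after tensoring with $k$---but the strategy is identical.
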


\begin{proof}
By Proposition \ref{prop:unit and differential}, we obtain the commutative diagram
\begin{equation*}
\begin{tikzcd}
    H^0(X,\sO_U^*/\sO_X^*) \arrow{r} \arrow{d}{H^0(\gamma)} & \Pic(X) \arrow{d}{H^1(\alpha)} \\
    H^0(D^n,\sO_{D^n}) \arrow{r} & H^1(X,\Omega_X).
\end{tikzcd}
\end{equation*}
Since $H^1(X,\Omega_X(\log\,D))=0$, the bottom horizontal map is surjective.
By Proposition~\ref{prop:unit and differential}, the map $H^0(\gamma) \otimes_{\Z} k$ is surjective.
Therefore, the map $H^1(\alpha) \otimes_{\Z}k$ is also surjective.
By the construction of $\alpha$, we have $\alpha'=H^1(\alpha) \otimes_{\Z} k$, as desired.
\end{proof}

\begin{thm}\label{thm:chara toric src}
Let $X$ be a smooth projective variety over an algebraically closed field of characteristic $p >0$ and $D$ a normal crossing divisor on $X$.
If $X$ is separably rationally connected and $\Omega_X(\log\,D)$ is numerically flat, then $(X,D)$ is a toric pair.
\end{thm}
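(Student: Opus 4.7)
The plan is to reduce Theorem~\ref{thm:chara toric src} to Theorem~\ref{thm: chara of toric fib} by first establishing that $X$ is globally $F$-split, and then using separable rational connectedness to collapse the resulting toric fibration over an ordinary abelian variety into a toric variety structure on $X$ itself. Granting global $F$-splitting, Theorem~\ref{thm: chara of toric fib} furnishes a finite \'etale cover $f \colon Y \to X$ together with a toric fibration $\varphi \colon Y \to A$ over an ordinary abelian variety, with toric boundary $f^*D$. A finite \'etale cover of an SRC variety is again SRC, and any morphism from an SRC variety to an abelian variety is constant, since $\P^1 \to A$ is constant and SRC curves sweep out $Y$. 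Hence $A$ is a point, so $Y$ is itself a toric variety. Smooth projective SRC varieties are \'etale simply connected (by Koll\'ar), so $f$ is an isomorphism, and $(X,D)$ is a toric pair.

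The main step is thus to establish that $X$ is globally $F$-split from the hypotheses. I would combine the residue sequence of Proposition~\ref{prop:exact sequence of log differential}, the dlog diagram of Proposition~\ref{prop:unit and differential}, and Lemma~\ref{lem:surj from pic}. First, one aims to show $H^1(X, \Omega_X(\log\,D)) = 0$, using the numerical flatness of $\Omega_X(\log\,D)$ together with cohomological vanishing on SRC varieties. Lemma~\ref{lem:surj from pic} then yields a surjection $\Pic(X) \otimes_\Z k \twoheadrightarrow H^1(X, \Omega_X)$. Because, via the dlog diagram, this surjection factors through cohomology classes carried by line bundles on $X$, and because line bundles come with canonical Frobenius data, one hopes to leverage the log Calabi--Yau condition $K_X + D \equiv 0$ (a consequence of numerical flatness) to extract the vanishing of the Frobenius obstruction in $H^1(X, (B^1_X)^\vee)$, hence global $F$-splitting.

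The main obstacle lies in this $F$-splitting step: Lemma~\ref{lem:kill} is not directly available because SRC varieties have no nontrivial \'etale covers on which to absorb cohomology classes, and the Picard-to-cohomology surjectivity alone does not obviously give a splitting of $\sO_X \to F_*\sO_X$. An alternative route I would pursue in parallel is to show directly, in the SRC setting, that the extension
\[
0 \longrightarrow B^1_X \longrightarrow Z^1_X(\log\,D) \longrightarrow \Omega_X(\log\,D) \longrightarrow 0
\]
splits, bypassing the intermediate $F$-split step and invoking Lemma~\ref{lem:pre chara} together with the rigidity argument of the first paragraph to conclude that $(X,D)$ is a toric pair.
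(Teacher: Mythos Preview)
Your endgame is correct and matches the paper: once the Cartier extension
\[
0 \longrightarrow B^1_X \longrightarrow Z^1_X(\log\,D) \longrightarrow \Omega_X(\log\,D) \longrightarrow 0
\]
splits, Lemma~\ref{lem:pre chara} and Koll\'ar's simple connectedness for SRC varieties finish the argument exactly as you describe. The paper takes precisely your ``alternative route'' and never establishes global $F$-splitting as an intermediate step; your first plan is a detour, and as you suspected it does not go through cleanly.

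The genuine gap is that you have not supplied the mechanism that makes the alternative route work. Two ingredients are missing. First, on an SRC variety every numerically flat bundle is trivial (this is \cite{BPS13}*{Theorem~1.1}), so $\Omega_X(\log\,D)$ is actually \emph{free}, and combined with $H^1(X,\sO_X)=0$ this gives $H^1(X,\Omega_X(\log\,D))=0$ on the nose. Second --- and this is the heart of the argument --- the dlog classes are invariant under the Cartier operator: $C(\phi^{-1}d\phi)=\phi^{-1}d\phi$. Since by Lemma~\ref{lem:surj from pic} the dlog images generate $H^1(X,\Omega_X)$, and one checks (using $H^0(X,B^2_X)=0$, which follows from $H^0(X,\Omega^2_X)=0$) that $H^1(X,Z^1_X)\hookrightarrow H^1(X,\Omega_X)$ is an isomorphism, the induced $p^{-1}$-linear endomorphism of $H^1(X,Z^1_X)$ is generated by its fixed vectors and is therefore injective. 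Injectivity of $H^1(C)$ on the sequence $0\to B^1_X\to Z^1_X\to \Omega_X\to 0$ forces $H^1(X,B^1_X)=0$, and then the log Cartier sequence splits because $\Omega_X(\log\,D)$ is free. Your outline gestures at the Picard surjectivity but does not connect it to Cartier invariance, which is the step that actually kills $H^1(X,B^1_X)$.
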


\begin{proof}
Since $X$ is separable rationally connected, it is known that $H^0(X,\Omega_X^i)=0$ for $i >0$ and $H^1(X,\sO_X)=0$.
Since $\Omega_X(\log\,D)$ is free by \cite{BPS13}*{Theorem~1.1}, we obtain $H^1(X,\Omega_X(\log\,D))=0$.
Since $B_X^2$ is contained in $F_*\Omega_X^2$, we have $H^0(X,B_X^2)=0$.
We consider the exact sequence
\[
0 \longrightarrow Z_X^1 \longrightarrow F_*\Omega_X^1 \longrightarrow B_X^1 \longrightarrow 0,
\]
then we have the injection
\[
\delta \colon H^1(X,Z^1_X) \hookrightarrow H^1(X,\Omega_X).
\]
Since the image of the map
\[
\alpha \colon \sO_X^* \longrightarrow \Omega_X\ ;\ \phi \mapsto \phi^{-1}d\phi
\]
is contained in $Z^1_X$, we have
\[
\Pic(X) \otimes_{\Z} k \longrightarrow H^1(X,Z_X^1) \overset{\delta}{\longrightarrow} H^1(X,\Omega_X^1), 
\]
where the composition is $\alpha'$ in the statement of Lemma \ref{lem:surj from pic}.
In particular, $\delta$ is an isomorphism by Lemma \ref{lem:surj from pic}.
Thus, we obtain a $p^{-1}$-linear action $\lambda$ on $H^1(X,Z_X^1)$ by
\[
\lambda \colon H^1(X,Z_X^1) \overset{H^1(C)}{\longrightarrow} H^1(X,\Omega_X) \overset{\delta^{-1}}{\longrightarrow} H^1(X,Z_X^1).
\]
\begin{claim}\label{cl;semi-simple}
The cohomology $H^1(X,Z_X^1)$ is generated by $\lambda$-invariant elements.
\end{claim}
\begin{claimproof}
By Lemma \ref{lem:surj from pic} and the above argument, $H^1(X,Z^1_X)$ is generated by the image of the map
\[
H^1(\alpha) \colon \Pic(X) \longrightarrow H^1(X,Z_X^1).
\]
Furthermore, the images of $\alpha$ are $C$-invariant.
Indeed, we have
\[
C(\phi^{-1}d\phi)=\phi^{-p}C(d\phi)=\phi^{-p}\phi^{p-1}d\phi=\phi^{-1}d\phi.
\]
Therefore, we obtain the desired result.
\end{claimproof}

By Claim \ref{cl;semi-simple}, the action $\lambda$ is injective.
In particular, we have the injectivity of $H^1(C)$.
On the other hand, we consider the exact sequence
\[
0 \longrightarrow B^1_X \longrightarrow Z^1_X \overset{C}{\longrightarrow} F_*\Omega_X \longrightarrow 0,
\]
then we have the exact sequence
\[
0 \to H^1(X,B^1_X) \to H^1(X,Z^1_X) \xrightarrow{H^1(C)} H^1(X,\Omega_X^1).
\]
By the injectivity of $H^1(C)$, we have $H^1(X,B^1_X)=0$.
Next, we consider the exact sequence
\[
0 \to B^1_X \to Z^1_X(\log\,D) \to \Omega_X(\log\,D) \to 0.
\]
Since $H^1(X,B_X^1)=0$ and $\Omega_X(\log\,D)$ is free, the above exact sequence splits.
By Lemma \ref{lem:pre chara}, $(X,D)$ has a toric fibration up to \'etale cover.
By \cite{Kollar03}*{Theorem~13}, $(X,D)$ is a toric pair.
\end{proof}

\begin{proof}[Proof of Theorem \ref{intro:thm:toric_p}]
First, we assume that $(X,D)$ is a toric pair.
Since $X$ is rational, it is separably rationally connected.
Furthermore, by Theorem \ref{intro:thm:toric fibration}, $X$ is globally $F$-split and $\Omega_X(\log\,D)$ is numerically flat.
Therefore, we obtain the implications $ (1) \Rightarrow (2)$ and $ (1) \Rightarrow (3)$.
Next, we assume the condition (2).
By Theorem \ref{thm:chara toric src}, $(X,D)$ is a toric pair.
Finally, we assume the condition (3).
By Theorem \ref{intro:thm:toric fibration}, up to \'etale cover, $X$ has a toric fibration over an abelian variety with toric boundary $D$.
By \cite{Kollar03}*{Corollary~13}, the \'etale fundamental group of $X$ is finite, thus $(X,D)$ is toric pair up to \'etale cover.
Therefore, $X$ is separably rationally connected, so we obtain the implication $ (3) \Rightarrow (2)$.
\end{proof}

\begin{proof}[Proof of Theorem \ref{intro:thm:toric_0}]
We assume that $\Omega_X(\log\,D)$ is numerically flat and $X$ is rationally connected.
There exists a smooth affine variety $S$ over $\Spec \Z$, smooth projective variety $\mathcal{X}$ over $S$, reduced divisor $\mathcal{D}$ such that
\begin{enumerate}
    \item $S \to \Spec \Z$ is of finite type and dominant,
    \item $\kappa(\eta) \subseteq k$, where $k$ is the base field of $X$ and $\eta$ is the generic fiber of $S$,
    \item $\mathcal{X}_\eta \times \Spec k \simeq X$ and $\mathcal{D}_\eta \times \Spec k \simeq D$,
    \item $(\mathcal{X},\mathcal{D})$ is normal crossing over $S$.
\end{enumerate}
By \cite{HL10}*{Proposition~3.1}, semistability is an open condition, thus we may assume that $\Omega_{X_{\var{s}}}(\log\,D_{\var{s}})$ is numerically flat for every closed point $s \in S$ by Proposition \ref{prop:numerical flat}, where $\bar{s}$ is the geometric point associated to $s$.
Furthermore, by a similar argument to the proof of \cite{BPS13}*{Theorem~1.1}, we may assume that $X_{\bar{s}}$ is separably rationally connected for every closed point $s \in S$ by shrinking $S$.
By Theorem \ref{thm:chara toric src}, $(X_{\var{s}},D_{\var{s}})$ are toric pairs for all closed points $s \in S$.
By \cite{AWZ21}*{Corollary~4.1.5}, $(X,D)$ is a toric pair.
\end{proof}

\bibliographystyle{skalpha}
\bibliography{bibliography.bib}
\end{document}